\theoremstyle{plain}
\newtheorem{theorem}{Theorem}[section]
\newtheorem{lemma}[theorem]{Lemma}
\theoremstyle{definition}
\newtheorem{definition}[theorem]{Definition}
\theoremstyle{remark}
\newtheorem{remark}[theorem]{Remark}
\newtheorem{example}[theorem]{Example}
\newcommand{\Z}{Z}
\newcommand{\C}{\mathbb{C}}
\newcommand{\Zp}{{\mathbb{Z}_{\geq 0}}}
\newcommand{\Uqsl}{U_q \mathfrak{sl}_2}
\newcommand{\Uqsu}{U_q \mathfrak{su}_2}
\newcommand{\CSLq}{\mathbb{C}[SL_2]_q}
\newcommand{\CSUq}{\mathbb{C}[SU_2]_q}
\newcommand{\sym}{\textnormal{sym}}
\newcommand{\Dsym}{\mathbb{D}_2^\sym}
\newcommand{\detqsym}{\textnormal{det}_q^\sym}
\newcommand{\Matsym}{\textnormal{Mat}_2^\sym}
\newcommand{\CMatsymq}{\mathbb{C}[\Matsym]_q}
\newcommand{\PolMatsymq}{\textnormal{Pol}(\Matsym)_q}
\newcommand{\PolCq}{\textnormal{Pol}(\C)_{q^2}}
\newcommand{\contq}{C(\Dsym)_q}
\newcommand{\holoq}{A(\mathbb{D}_2^\sym)_q}
\newcommand{\shilovcontq}{C(S(\Dsym))_q}
\newcommand{\shilovregq}{\C[S(\Dsym)]_q}
\title[Shilov boundary for a $q$-analog of holomorphic functions]{The Shilov boundary for a $q$-analog of the holomorphic functions on the unit ball of $2 \times 2$ symmetric matrices}
\author{Jimmy Johansson}
\author{Lyudmila Turowska}
\address{Department of Mathematical Sciences,
Chalmers University of Technology and the University of Gothenburg,
Gothenburg SE-412 96, Sweden}
\email{gusjohaji@student.gu.se}
\email{turowska@chalmers.se}
\subjclass[2010]{Primary 17B37; Secondary 20G42, 46L07}
\date{\today}
\begin{document}
\begin{abstract}
We describe the Shilov boundary for a $q$-analog of the algebra of holomorphic functions on the unit ball in the space of symmetric $2 \times 2$ matrices.
\end{abstract}
\maketitle
\section{Introduction}
In the middle of the 1990s, L. Vaksman initiated a program to develop a $q$-analog of the theory of holomorphic functions on bounded symmetric domains (see \cite{vaksman-book} and references therein).
Among the numerous results which have emanated under this program we shall in this paper be interested in a noncommutative analog of the maximum modulus principle, a notion whose foundation is comprised of a noncommutative generalization of the Shilov boundary in the setting of operator algebras, which was developed by W. Arveson in \cites{arveson1, arveson2}.

In \cite{vaksman-boundary}, Vaksman proved a $q$-analog of the maximum modulus principle for the unit polydisk in $\C^n$, and more recently D. Proskurin and L. Turowska obtained, in \cite{pro-tur}, an analogous result for the unit ball in the space of $2 \times 2$ matrices. In this paper we show that similar methods can be used to compute the Shilov boundary ideal for a $q$-analog of the algebra of holomorphic functions on the unit ball in the space of symmetric $2 \times 2$ matrices.

The paper is organized as follows. In Section 2 we collect some basic material from the theory of quantum groups that we will need in this paper. In Section 3 we introduce the algebra of polynomials on quantum complex symmetric $2\times 2$ matrices and  discuss its universal enveloping $C^*$-algebra $C(\Dsym)_q$, a $q$-analog of the continuous functions on the unit ball $\Dsym = \{ \Z \in \Matsym: \Z^* \Z \leq I \}$. We  prove, in particular, that the Fock representation is a faithful irreducible representation of  $C(\Dsym)_q$. In Section 4 we describe the Shilov boundary ideal for the closed subalgebra $A(\Dsym)_q$, a $q$-analog of the algebra of functions holomorphic on the open unit ball of $\Matsym$ and continuous on its closure. The key tool, like in \cite{vaksman-boundary} and \cite{pro-tur}, is a unitary dilation of a contractive operator on a Hilbert space. Finally, in Section 5, we show that our result agrees with the definition of  a $*$-algebra referred to as the algebra of regular functions on the Shilov boundary, whose definition was proposed in \cite{bershtein-2}.

In this paper all algebras are assumed to be associative unital algebras over $\mathbb{C}$ and  $q \in (0, 1)$.

\section{Preliminaries}\label{section2}
In this section we review and fix our notation for the notions from the theory of quantum groups that we shall employ in this paper.
%In particular we shall recall the definitions and properties of the Hopf $*$-algebras $\CSUq$ and $\Uqsu$.

The algebra $\CSLq$ is defined by the generators $t_{i j}$, $i, j = 1, 2$, and the relations
\begin{align*}
	&t_{1 1} t_{2 1} = q t_{2 1} t_{1 1}, \quad
	t_{1 1} t_{1 2} = q t_{1 2} t_{1 1}, \quad
	t_{1 2} t_{2 1} = t_{2 1} t_{1 2} \\
	&t_{2 2} t_{2 1} = q^{-1} t_{2 1} t_{1 1}, \quad
	t_{2 2} t_{1 2} = q^{-1} t_{1 2} t_{2 2} \\
	&t_{1 1} t_{2 2} - t_{2 2} t_{1 1} = (q - q^{-1}) t_{1 2} t_{2 1}, \quad
	t_{1 1} t_{2 2} - q t_{1 2} t_{2 1} = 1.
\end{align*}
We define $\CSUq = (\CSLq, *)$, where the involution $*$ is determined by $t_{1 1}^* = t_{2 2}$ and $t_{1 2}^* = -q t_{2 1}$. %The irreducible representations of $\mathbb{C}[SU_2]_q$ are well known.

Here and throughout this paper we denote by $\{ e_k: k \in \Zp \}$ the standard orthonormal basis for the Hilbert space $\ell^2(\Zp)$, and we let $S$, $C_n$, $D \in \mathcal{B}(\ell^2(\Zp))$ denote the operators defined by
\begin{equation}
\label{eq:operators}
	S e_k = e_{k + 1}, \quad
	C_n e_k = \sqrt{1 - q^{n k}} e_k, \quad
	D e_k = q^k e_k.
\end{equation}
It is well known that $\mathbb{C}[SU_2]_q$ admits the irreducible representations $\pi_\varphi$, $\varphi \in [0, 2 \pi)$, acting on $\ell^2(\Zp)$, which are determined by
\begin{align}
\begin{split}
\label{suq2}
	\pi_\varphi(t_{1 1}) &= S^* C_2, \quad \pi_\varphi(t_{1 2}) = -q e^{-i \varphi} D \\
	\pi_\varphi(t_{2 1}) &= e^{i \varphi} D, \quad \pi_\varphi(t_{2 2}) = C_2 S.
\end{split}
\end{align}

$\CSUq$ can also be equipped with a Hopf $*$-algebra structure (see e.g.~\cite{klimyk_schmudgen}).
In particular, the comultiplication is given by
\[
	\Delta(t_{i j}) = \sum_{k = 1}^2 t_{i k} \otimes t_{k j}, \quad i,j=1,2.
\]

We denote by $\Uqsl$ the Hopf algebra generated by $E, F, K, K^{-1}$ satisfying the relations
\[
	K K^{-1} = K^{-1} K = 1, \quad
	K E = q^2 E K, \quad
	K F = q^{-2} F K
\]
\[
	[E, F] = \frac{K - K^{-1}}{q - q^{-1}}.
\]
The comultiplication $\Delta$, the antipode $S$, and the the counit $\varepsilon$ are defined by
\[
	\Delta(E) = E \otimes 1 + K \otimes E, \quad
	\Delta(F) = F \otimes K^{-1} + 1 \otimes F, \quad
	\Delta(K) = K \otimes K
\]
\[
	S(E) = -K^{-1} E, \quad
	S(F) = -F K, \quad
	S(K) = K^{-1}
\]
\[
	\varepsilon(E) = \varepsilon(F) = 0, \quad
	\varepsilon(K) = 1.
\]
We let $\Uqsu$ denote the Hopf $*$-algebra $(\Uqsl, *)$, where the involution is given by
\[
	E^* = K F, \quad
	F^* = E K^{-1}, \quad
	K^* = K.
\]
We recall that $\CSLq$ is the finite dual of $\Uqsl$. As linear functionals the elements of $\CSLq$ are determined by
\begin{equation}
\label{eq:CSLq-functionals}
	t_{1 2}(E) = q^{-1/2}, \quad
	t_{2 1}(F) = q^{1/2}, \quad
	t_{1 1}(K) = q, \quad
	t_{2 2}(K) = q^{-1}
\end{equation}
and all other evaluations on the generators are zero.

We shall also need the $*$-algebra $\PolCq$, a $q$-analog of the $*$-algebra of polynomials on $\C$, which is defined by the generator $z$ and the relation $z^* z = q^4 z z^* + 1 - q^4$.

We have the following list of irreducible representations of $\PolCq$, up to unitary equivalence (see \cite{pusz-woronowicz}):
\begin{enumerate}[(i)]
\item
the Fock representation $\rho_F$ acting on $\ell^2(\Zp)$: $\rho_F(z) = C_4 S$;
\item
one-dimensional representations $\rho_\varphi$, $\varphi \in [0, 2 \pi)$: $\rho_\varphi(z) = e^{i \varphi}$.
\end{enumerate}
%\section{The $*$-algebra $\PolMatsymq$ and its representations}
\section{A $q$-analog of the algebra of continuous and holomorphic functions on the unit ball}
The algebra $\CMatsymq$ is defined by the generators $z_{1 1}$, $z_{2 1}$, $z_{2 2}$ satisfying the relations
\begin{align}
%\label{eq:CMatsymq-1}
	&z_{1 1} z_{2 1} = q^2 z_{2 1} z_{1 1} \nonumber\\
\label{eq:CMatsymq-2}
	&z_{2 1} z_{2 2} = q^2 z_{2 2} z_{2 1} \\
%\label{eq:CMatsymq-3}
	&z_{1 1} z_{2 2} - z_{2 2} z_{1 1} = q (q^2 - q^{-2}) z_{2 1}^2.\nonumber
\end{align}
The algebra admits a natural gradation given by $\deg z_{ij}=1$.
The $*$-algebra $\PolMatsymq$, a $q$-analog of the $*$-algebra of polynomials on the space of symmetric complex $2 \times 2$ matrices, is defined by the generators $z_{1 1}$, $z_{2 1}$, $z_{2 2}$ satisfying the relations~\eqref{eq:CMatsymq-2}
%--\eqref{eq:CMatsymq-3} 
and
\begin{align}
	z_{1 1}^* z_{1 1} &= q^4 z_{1 1} z_{1 1}^* -
	q (q^{-1} - q) (1 + q^2)^2 z_{2 1} z_{2 1}^* + \nonumber\\
	&(q^{-1} - q)^2 (1 + q^2) z_{2 2} z_{2 2}^* + 1 - q^4 \nonumber\\
	z_{1 1}^* z_{2 1} &= q^2 z_{2 1} z_{1 1}^* -
	q (q^{-1} - q) (q^{-1} + q) z_{2 2} z_{2 1}^* \nonumber\\
	\label{eq:PolMatsymq-last}
  z_{1 1}^* z_{2 2} &= z_{2 2} z_{1 1}^* \\
	z_{2 1}^* z_{2 1} &= q^2 z_{2 1} z_{2 1}^* - (1 - q^2) z_{2 2} z_{2 2}^* + 1 - q^2 \nonumber\\
	z_{2 1}^* z_{2 2} &= q^2 z_{2 2} z_{2 1}^* \nonumber\\
z_{2 2}^* z_{2 2} &= q^4 z_{2 2} z_{2 2}^* + 1 - q^4.\nonumber
\end{align}
\begin{remark}
For the sake of symmetry and for brevity in formulas (see e.g. Lemma~\ref{lemma:homomorphism}), one may include $z_{1 2}$ as an additional generator together with the relation $z_{1 2} = q z_{2 1}$.
\end{remark}
We have that $\CMatsymq$ is a $\Uqsl$-module algebra, where the $\Uqsl$-action is given as follows (\cite{bershtein-2}):
\begin{equation}
\label{eq:E-action}
	E z_{i j} = q^{-1/2}
	\begin{cases}
	0, &i = j = 1 \\
	z_{1 1}, &i = 2, \, j = 1 \\
	(q + q^{-1}) z_{2 1}, &i = j = 2
	\end{cases}
\end{equation}
\begin{equation}
\label{eq:F-action}
	F z_{i j} = q^{1/2}
	\begin{cases}
	(q + q^{-1}) z_{2 1}, &i = j = 1 \\
	z_{2 2}, &i = 2, \, j = 1 \\
	0, &i = j = 2
	\end{cases}
\end{equation}
\begin{equation}
\label{eq:K-action}
	K z_{i j} =
	\begin{cases}
	q^2 z_{1 1}, &i = j = 1 \\
	z_{2 1}, &i = 2, \, j = 1 \\
	q^{-2} z_{2 2}, &i = j = 2.
	\end{cases}
\end{equation}
Recall that the action of $\Uqsl$ on other elements of $\CMatsymq$ can be obtained from the property that
\[
	\xi(f g) = \sum_i(\xi_i^{(1)}f)(\xi_i^{(2)}g)
\]
for $\xi \in \Uqsl$, $f,g\in \CMatsymq$ and $\Delta(\xi)=\sum_i\xi_i^{(1)}\otimes\xi_i^{(2)}$ (in the Sweedler notation).

Since the involutions in $\Uqsu$ and $\PolMatsymq$ are compatible in the sense that
\[
	(\xi f)^* = S(\xi)^* f^*, \quad \xi \in \Uqsu, \, f \in \PolMatsymq,
\]
the action of $\Uqsl$ on $\CMatsymq$ can be extended to an action of $\Uqsu$ on $\PolMatsymq$. Explicitly, the $\Uqsu$-action is given by~\eqref{eq:E-action}--\eqref{eq:K-action} together with
\[
	E z_{i j}^* = -q^{-2} (F z_{i j})^*, \quad
	F z_{i j}^* = -q^2 (E z_{i j})^*, \quad
	K z_{i j}^* = (K^{-1} z_{i j})^*.
\]

The irreducible representations of $\PolMatsymq$, which we present in the following theorem, were classified in \cite{bershtein-1}.
\begin{theorem}
The irreducible representations of $\PolMatsymq$ up to unitary equivalence are given by
\begin{enumerate}[(i)]
\item
the Fock representation acting on $\ell^2(\Zp)^{\otimes 3}$:
\begin{align*}
	\pi_F(z_{1 1}) &= I \otimes D^2 \otimes C_4 S -
	q^{-1} S^* C_4 \otimes C_2 S C_2 S \otimes I \\
	\pi_F(z_{2 1}) &= D^2 \otimes C_2 S \otimes I \\
	\pi_F(z_{2 2}) &= C_4 S \otimes I \otimes I;
\end{align*}
\item
representations $\tau_\varphi$, $\varphi \in [0, 2 \pi)$, acting on $\ell^2(\Zp)^{\otimes 2}$:
\begin{align*}
	\tau_\varphi(z_{1 1}) &= e^{i \varphi} I \otimes D^2 -
	q^{-1} S^* C_4 \otimes C_2 S C_2 S \\
	\tau_\varphi(z_{2 1}) &= D^2 \otimes C_2 S \\
	\tau_\varphi(z_{2 2}) &= C_4 S \otimes I;
\end{align*}
\item
representations $\omega_\varphi$, $\varphi \in [0, 2 \pi)$, acting on $\ell^2(\Zp)$:
\begin{align*}
	\omega_\varphi(z_{1 1}) &= -q^{-1} e^{2 i \varphi} S^* C_4 \\
	\omega_\varphi(z_{2 1}) &= e^{i \varphi} D^2 \\
	\omega_\varphi(z_{2 2}) &= C_4 S;
\end{align*}
\item
representations $\nu_\varphi$, $\varphi \in [0, 2 \pi)$, acting on $\ell^2(\Zp)$:
\begin{align*}
	\nu_\varphi(z_{1 1}) &= q^{-1} C_4 S \\
	\nu_\varphi(z_{2 1}) &= 0 \\
	\nu_\varphi(z_{2 2}) &= e^{i \varphi} I;
\end{align*}
\item
one-dimensional representations $\theta_{\varphi_1, \varphi_2}$, $\varphi_1, \varphi_2 \in [0, 2 \pi)$:
\begin{align*}
	\theta_{\varphi_1, \varphi_2}(z_{1 1}) &= q^{-1} e^{i \varphi_1} \\
	\theta_{\varphi_1, \varphi_2}(z_{2 1}) &= 0 \\
	\theta_{\varphi_1, \varphi_2}(z_{2 2}) &= e^{i \varphi_2}.
\end{align*}
\end{enumerate}
\end{theorem}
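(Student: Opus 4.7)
The plan splits into three tasks: verify that each listed family defines a $*$-representation, check irreducibility and pairwise inequivalence, and finally show that the list is exhaustive.

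The first task is a direct computation: substitute each formula into the relations~\eqref{eq:CMatsymq-2}--\eqref{eq:PolMatsymq-last} and use the standard identities $S^*S = I$, $SS^* = I - P_0$ with $P_0$ the projection onto $\C e_0$, together with $C_n^2 = I - D^n$ and the commutations between $S$ and $D$. The verification for $\pi_F$ is the only one that is slightly involved, because of the cross term $-q^{-1} S^* C_4 \otimes C_2 S C_2 S \otimes I$ in $\pi_F(z_{11})$; the other families follow by specialization. For irreducibility I would show that the generated $C^*$-algebra contains a rank-one projection. For instance, in $\pi_F$ the operator $I - \pi_F(z_{22})\pi_F(z_{22})^*$ is of the form $D^4 \otimes I \otimes I$, and combining with polynomials in $\pi_F(z_{21})\pi_F(z_{21})^*$ and $\pi_F(z_{11})\pi_F(z_{11})^*$ one isolates the rank-one projection onto $\C (e_0 \otimes e_0 \otimes e_0)$. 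The analogous argument works on the other spaces. Pairwise inequivalence is then read off from obvious invariants: the dimension of the space, whether $\pi(z_{21}) = 0$, and whether $\pi(z_{22})\pi(z_{22})^*$ has eigenvalue $1$ (one-dimensional behaviour) or a Fock-type spectrum $\{1 - q^{4k} : k \in \Zp\}$.

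The main obstacle is exhaustivity. The strategy is an iterated spectral reduction via the Pusz--Woronowicz classification of $\PolCq$. Since $z_{22}^* z_{22} = q^4 z_{22} z_{22}^* + 1 - q^4$ is exactly the $\PolCq$ relation, in any irreducible $*$-representation $\pi$ the spectral decomposition of $\pi(z_{22})\pi(z_{22})^*$ together with Pusz--Woronowicz forces $\pi(z_{22})$ to be (a direct summand of) either a Fock operator $C_4 S$ on some $\ell^2(\Zp)$-factor or a unitary scalar $e^{i\varphi_2}$. Next I would examine $\pi(z_{21})$ using
\[
z_{21}^* z_{21} = q^2 z_{21} z_{21}^* - (1 - q^2) z_{22} z_{22}^* + 1 - q^2,
\]
which, after substituting the spectral action of $z_{22} z_{22}^*$, is again a $\PolCq$-like relation with possibly shifted constants; the cross relation $z_{21} z_{22} = q^2 z_{22} z_{21}$ pins down how $\pi(z_{21})$ intertwines the $z_{22}$-decomposition. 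Finally $\pi(z_{11})$ is analysed via the first of the relations~\eqref{eq:PolMatsymq-last} together with $z_{11} z_{21} = q^2 z_{21} z_{11}$ and $z_{11} z_{22} - z_{22} z_{11} = q(q^2 - q^{-2}) z_{21}^2$.

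The delicate step is the bookkeeping in this last reduction: ruling out hybrids that are neither Fock- nor scalar-like on one of the generators, and showing that each admissible combination of \emph{Fock-type} versus \emph{unitary-type} behaviour on the three generators collapses, via the coupling relations, to precisely one of the five listed families and fixes the phase parameters. I expect the case-split to mirror the stratification of the classical space of symmetric $2\times 2$ matrices by the rank of $I - \Z^*\Z$, with (i) corresponding to strictly interior points, (ii)--(iv) to intermediate strata, and (v) to the Shilov boundary. This is the argument carried out in~\cite{bershtein-1}, and I would follow its structure.
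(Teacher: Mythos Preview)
The paper does not prove this theorem; it simply attributes the classification to \cite{bershtein-1} and states the list. Your proposal explicitly acknowledges this and sketches the approach of that reference (iterated Pusz--Woronowicz reduction on $z_{22}$, then $z_{21}$, then $z_{11}$), so there is no divergence to discuss: your plan and the paper's treatment agree, with your outline being strictly more detailed than what the paper itself supplies.
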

From the above list it readily follows that $\PolMatsymq$ is $*$-bounded, i.e., for each $x \in \PolMatsymq$ there exists a constant $C_x$ such that $\| \pi(x) \| \leq C_x$ for all representations $\pi$ of $\PolMatsymq$. We let $\contq$ denote the universal enveloping $C^*$-algebra of $\PolMatsymq$ and $\holoq$ the closed (non-involutive) subalgebra generated by $z_{1 1}$, $z_{2 1}$, and $z_{2 2}$. We recall that the universal enveloping $C^*$-algebra can be defined as a pair $(\contq, \rho)$, where $\rho: \PolMatsymq \rightarrow \contq$ is a $*$-homomorphism with the property that for each representation $\pi$ of $\PolMatsymq$ there is a unique representation $\varphi$ of $\contq$ such that $\pi = \varphi \circ \rho$. It is useful to note that the irreducible representations of $\PolMatsymq$ are in one-to-one correspondence with the irreducible representations of $\contq$. We say that $\contq$ (resp. $\holoq$) is a $q$-analog of the $C^*$-algebra of continuous functions (resp. subalgebra of holomorphic functions) on the closed unit ball of symmetric complex $2 \times 2$ matrices $\Dsym = \{ \Z \in \Matsym: \Z^* \Z \leq I \}$.

We will now consider an alternative way of constructing representations of $\PolMatsymq$ which was presented in \cite{bershtein-1}. Imperative to this construction is the following $*$-homomorphism, whose existence was indicated in \cite{bershtein-1} without proof, of a coaction corresponding to the action of the unitary group $U_2$ of $2\times 2$ matrices
\[
	Z \mapsto U^T Z U, \quad U \in U_2, \,
	Z \in \Dsym \subset \textnormal{Mat}_2^\sym.
\]
\begin{lemma}
\label{lemma:homomorphism}
There is a $*$-homomorphism
\[
	\mathcal{D}: \PolMatsymq \longrightarrow \PolMatsymq \otimes \CSUq
\]
given by
\[
	\mathcal{D}(z_{i j}) =
	\sum_{k, l = 1}^2 z_{k l} \otimes t_{k i} t_{l j}, \quad i,j=1,2.
\]
%\[
%	\mathcal{D}(z_{i j}) = z_{1 1} \otimes t_{1 i} t_{1 j} +
%	q z_{2 1} \otimes t_{1 i} t_{2 j} + z_{2 1} \otimes t_{2 i} t_{1 j} +
%	z_{2 2} \otimes t_{2 i} t_{2 j}.
%\]
\end{lemma}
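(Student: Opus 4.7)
Since $\mathcal{D}$ is specified only on generators, promoting it to a $*$-homomorphism reduces to checking that the elements $\mathcal{D}(z_{ij})$ and their formal adjoints, viewed as elements of the $*$-algebra $\PolMatsymq\otimes\CSUq$, satisfy the nine defining relations \eqref{eq:CMatsymq-2}--\eqref{eq:PolMatsymq-last}. The plan is direct verification.

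Following the remark after \eqref{eq:PolMatsymq-last}, first set $z_{12}:=q z_{21}$ so that the formula $\mathcal{D}(z_{ij})=\sum_{k,l=1}^{2}z_{kl}\otimes t_{ki}t_{lj}$ makes sense for all $i,j\in\{1,2\}$. A preliminary consistency check is $\mathcal{D}(z_{12})=q\mathcal{D}(z_{21})$: expanding, this reduces to the identity
\[
\sum_{k,l=1}^{2}z_{kl}\otimes(t_{k1}t_{l2}-q t_{k2}t_{l1})=(z_{12}-q z_{21})\otimes 1=0,
\]
which follows from $t_{11}t_{12}=q t_{12}t_{11}$, $t_{22}t_{21}=q^{-1}t_{21}t_{22}$, the commutator $[t_{11},t_{22}]=(q-q^{-1})t_{12}t_{21}$, and the quantum determinant relation $t_{11}t_{22}-q t_{12}t_{21}=1$ in $\CSLq$.

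Next I would verify \eqref{eq:CMatsymq-2} by expanding each product $\mathcal{D}(z_{ij})\mathcal{D}(z_{kl})$ as $\sum_{a,b,c,d}z_{ab}z_{cd}\otimes t_{ai}t_{bj}t_{ck}t_{dl}$, collecting the coefficient of each normal-ordered monomial in the first tensor factor, and checking that the resulting identities in the second factor are consequences of the quadratic relations of $\CSLq$ and the quantum determinant. For the $*$-relations \eqref{eq:PolMatsymq-last} I would set $\mathcal{D}(z_{ij}^{*}):=\mathcal{D}(z_{ij})^{*}$, so that
\[
\mathcal{D}(z_{ij})^{*}=\sum_{k,l=1}^{2}z_{kl}^{*}\otimes t_{lj}^{*}t_{ki}^{*}
\]
by anti-multiplicativity, and check each of the six identities by expanding both sides with the substitutions $t_{11}^{*}=t_{22}$, $t_{22}^{*}=t_{11}$, $t_{12}^{*}=-q t_{21}$, $t_{21}^{*}=-q^{-1}t_{12}$, then reducing using the $*$-relations of $\PolMatsymq$ in the first tensor factor and the $\CSLq$ relations in the second. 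The only real obstacle is the sheer bulk of the calculation — each relation produces up to sixteen monomials per side — so the practical task is to arrange the bookkeeping carefully: fix a normal ordering once and for all, and apply the $q$-symmetry $z_{12}=q z_{21}$ and the quantum determinant in a systematic order to keep the verification tractable.
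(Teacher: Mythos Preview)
Your brute-force plan is valid in principle: since $\PolMatsymq$ is presented by generators and relations, checking that the images $\mathcal{D}(z_{ij})$ and their adjoints satisfy \eqref{eq:CMatsymq-2}--\eqref{eq:PolMatsymq-last} inside $\PolMatsymq\otimes\CSUq$ is exactly what is required, and your preliminary consistency check $\mathcal{D}(z_{12})=q\,\mathcal{D}(z_{21})$ is correct. However, the paper takes a genuinely different and much shorter route. Rather than expanding each of the nine relations, it exploits the Hopf-algebra duality between $\CSLq$ and $\Uqsl$: one shows that for every $\xi\in\Uqsl$ the evaluation $(\id\otimes\mathrm{ev}_\xi)\circ\mathcal{D}$ coincides with the known $\Uqsl$-action $x\mapsto\xi x$ on $\CMatsymq$ given by \eqref{eq:E-action}--\eqref{eq:K-action}. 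Since that action is already known to be compatible with the relations \eqref{eq:CMatsymq-2} (this is precisely the statement that $\CMatsymq$ is a $\Uqsl$-module algebra), and since the pairing between $\CSLq$ and $\Uqsl$ is nondegenerate, $\mathcal{D}$ automatically respects those relations; the extension to a $*$-map on $\PolMatsymq$ then follows from the compatibility of involutions. Your approach buys self-containment --- it needs no Hopf-algebra machinery beyond the $\CSLq$ relations --- but at the cost of a sizeable and error-prone computation that you have only outlined; the paper's approach replaces that computation by a short structural argument, with the only explicit checks being on single generators.
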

\begin{proof}
We begin by establishing that the restriction of $\mathcal{D}$ to $\CMatsymq$,
\begin{equation}
\label{eq:homomorphism}
	\CMatsymq \longrightarrow \CMatsymq \otimes \CSLq,
\end{equation}
is a homomorphism. Using the fact that $\CSLq \subset (\Uqsl)^*$ as linear functionals given by~\eqref{eq:CSLq-functionals}, we claim that the map~\eqref{eq:homomorphism} recovers the $\Uqsl$-action on $\CMatsymq$, i.e.,
\begin{equation*}
\label{eq:action}
	\mathcal{D}(x)(\xi) = \xi x, \quad x \in \CMatsymq, \, \xi \in \Uqsl.
\end{equation*}
Consequently $\mathcal{D}$ respects the relations~\eqref{eq:CMatsymq-2},
%--\eqref{eq:CMatsymq-3}, 
showing that the map~\eqref{eq:homomorphism} is a well-defined homomorphism.

It is straightforward to verify that the claim holds when $x$ and $\xi$ are generators of $\CMatsymq$ and $\Uqsl$ respectively. In order to show that $\mathcal{D}(z_{i j})(\xi) = \xi z_{i j}$ for all $\xi \in \Uqsl$, it would be enough to see that whenever $\mathcal D(z_{ij})(\xi_k)=\xi_kz_{ij}$ for $\xi_k\in \Uqsl$, $k=1,2$, we have $\mathcal{D}(z_{i j})(\xi_1 \xi_2) = \xi_1 \xi_2 z_{i j}$. Using the fact that the comultiplication is a homomorphism, we have the following computation:
\begin{align*}
	\mathcal{D}(z_{i j})(\xi_1 \xi_2) &=
	\sum_{k, l = 1}^2 z_{k l} t_{k i} t_{l j}(\xi_1 \xi_2)
	= \sum_{k, l = 1}^2 z_{k l} \Delta(t_{k i} t_{l j})(\xi_1 \otimes \xi_2) \\
	&= \sum_{r, s = 1}^2 \sum_{k, l = 1}^2 z_{k l}
	(t_{k r} t_{l s})(\xi_1) (t_{r i} t_{s j})(\xi_2) \\
	&= \sum_{r, s = 1}^2 \mathcal{D}(z_{r s})(\xi_1) (t_{r i} t_{s j})(\xi_2) \\
	&= \xi_1 \sum_{r, s = 1}^2 z_{r s} (t_{r i} t_{s j})(\xi_2)
	= \xi_1 \mathcal{D}(z_{i j})(\xi_2) = \xi_1 \xi_2 z_{i j}.
\end{align*}
It remains to show that extending $\mathcal D$ to $\CMatsymq$ naturally by linearity and by letting $\mathcal{D}(fg)= \mathcal{D}(f) \mathcal{D}(g)$, $f$, $g\in \CMatsymq$, we obtain  $\mathcal{D}(f g)(\xi) = \xi (f g)$ for all $\xi \in \Uqsl$ and  $f$, $g\in\CMatsymq$. Let
\[
	\Delta(\xi) = \sum_k \xi_k^{(1)} \otimes \xi_k^{(2)}
\]
denote the comultiplication of an element $\xi \in \Uqsl$. For $f, g$ generators of $\CMatsymq$ we have
\begin{align*}
	\mathcal{D}(f g)(\xi) &= \mathcal{D}(f) \mathcal{D}(g)(\xi) \\
	&= \left( \sum_i f_i \otimes s_i \right)
	\left( \sum_j g_j \otimes t_j \right)(\xi) \\
	&= \sum_{i, j} f_i g_j s_i t_j(\xi)
	= \sum_k \sum_{i, j} f_i g_j s_i \left(\xi_k^{(1)} \right)
	t_j \left( \xi_k^{(2)} \right) \\
	&= \sum_k \left( \sum_i f_i s_i \left( \xi_k^{(1)} \right) \right)
	\left( \sum_j g_j t_j \left( \xi_k^{(2)} \right) \right) \\
	&= \sum_k \xi_k^{(1)} f \, \xi_k^{(2)} g = \xi (f g).
\end{align*}
The general case is proved by induction on the degree of $f$ and $g$.
Since $\PolMatsymq$ is a $\Uqsu$-module algebra and the involutions in $\Uqsu$ and $\CSUq$ are compatible, it follows that~\eqref{eq:homomorphism} can be extended to a $*$-homo\-morphism on $\PolMatsymq$.
\end{proof}
From relations~\eqref{eq:CMatsymq-2}--\eqref{eq:PolMatsymq-last} it follows that the family of maps
\[
	\Pi_\varphi: \PolMatsymq \longrightarrow \PolCq,
\]
$\varphi \in [0, 2 \pi)$, defined on the generators of $\PolMatsymq$ by
\[
	\Pi_\varphi(z_{1 1}) = q^{-1} z, \quad
	\Pi_\varphi(z_{2 1}) = 0, \quad
	\Pi_\varphi(z_{2 2}) = e^{i \varphi}
\]
is a $*$-homomorphism. 

Let $\rho_F$ and  $\rho_\varphi$, $\varphi\in[0,2\pi)$,  be the irreducible representations of $\PolCq$ given in Section \ref{section2}.  Defining
\[
	\mathcal{F}_\varphi = \rho_F \circ \Pi_\varphi, \quad
	\chi_{\varphi_1, \varphi_2} = \rho_{\varphi_1} \circ \Pi_{\varphi_2},
\]
 we obtain two families of representations of $\PolMatsymq$:
\[
	(\mathcal{F}_\varphi \otimes \pi_0) \circ \mathcal{D}, \quad
	(\chi_{\varphi_1, \varphi_2} \otimes \pi_0) \circ \mathcal{D},\quad
\varphi, \varphi_1, \varphi_2 \in [0, 2 \pi),
\]
 here $\pi_0$ is the irreducible representation of $C[SU_2]_q$ given by (\ref{suq2}). Evaluated on the generators, we have
\begin{align*}
	(\mathcal{F}_\varphi \otimes \pi_0) \circ \mathcal{D}(z_{1 1})
	&= q^{-1} \rho_F(z) \otimes \pi_0(t_{1 1})^2 +
	e^{i \varphi} I \otimes \pi_0(t_{2 1})^2 \\
	&= q^{-1} C_4 S \otimes S^* C_2 S^* C_2 + e^{i \varphi} I \otimes D^2
\end{align*}
\begin{align*}
	(\mathcal{F}_\varphi \otimes \pi_0) \circ \mathcal{D}(z_{2 1})
	&= q^{-1} \rho_F(z) \otimes \pi_0(t_{1 2}) \pi_0(t_{1 1}) + e^{i \varphi} I \otimes 		\pi_0(t_{2 2}) \pi_0(t_{2 1}) \\
	&= -q^{-1} C_4 S \otimes S^* C_2 D + e^{i \varphi} I \otimes C_2 S D
\end{align*}
\begin{align*}
	(\mathcal{F}_\varphi \otimes \pi_0) \circ \mathcal{D}(z_{2 2})
	&= q^{-1} \rho_F(z) \otimes \pi_0(t_{1 2})^2 +
	e^{i \varphi} I \otimes \pi_0(t_{2 2})^2 \\
	&= q C_4 S \otimes D^2 + e^{i \varphi} I \otimes C_2 S C_2 S
\end{align*}
and
\begin{align}
\begin{split}
\label{eq:chi}
	(\chi_{\varphi_1, \varphi_2} \otimes \pi_0) \circ \mathcal{D}(z_{1 1}) &=
	q^{-1} e^{i \varphi_1} S^* C_2 S^* C_2 + e^{i \varphi_2} D^2 \\
	(\chi_{\varphi_1, \varphi_2} \otimes \pi_0) \circ \mathcal{D}(z_{2 1}) &=
	-q^{-1} e^{i \varphi_1} S^* C_2 D + e^{i \varphi_2} C_2 S D \\
	(\chi_{\varphi_1, \varphi_2} \otimes \pi_0) \circ \mathcal{D}(z_{2 2}) &=
	q e^{i \varphi_1} D^2 + e^{i \varphi_2} C_2 S C_2 S.
\end{split}
\end{align}
\begin{lemma}
\label{lemma:wick}
The representation $(\mathcal{F}_\varphi \otimes \pi_0) \circ \mathcal{D}$, $\varphi \in [0, 2 \pi)$, is unitarily equivalent to $\tau_\varphi$.
\end{lemma}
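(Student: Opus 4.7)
Write $\Pi = (\mathcal{F}_\varphi \otimes \pi_0) \circ \mathcal{D}$ and set $\Omega = e_0 \otimes e_0 \in \ell^2(\Zp)^{\otimes 2}$. The plan is to identify $\Omega$ as a canonical vacuum vector for both $\Pi$ and $\tau_\varphi$, to establish irreducibility of $\Pi$ via cyclicity of $\Omega$, and then to appeal to the classification of irreducibles recalled just before the lemma in order to conclude unitary equivalence and to match the parameters.

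First I would verify the vacuum identities by a direct calculation from the formulas for $\Pi(z_{ij})$ displayed immediately before the lemma. Using $C_2 e_0 = 0$, $C_4 e_0 = 0$, and $D^2 e_0 = e_0$, one checks that
\[
    \Pi(z_{21})^* \Omega = 0, \quad \Pi(z_{22})^* \Omega = 0, \quad \Pi(z_{11}) \Omega = e^{i \varphi} \Omega,
\]
and an entirely analogous calculation using the formulas for $\tau_\varphi$ shows that the same three identities hold with $\tau_\varphi$ in place of $\Pi$ (the eigenvalue of $z_{11}$ on $\Omega$ being $e^{i\varphi}$ in that case as well).

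The next step is to show that $\Omega$ is cyclic for $\Pi$ and that $\Pi$ is irreducible. For $\tau_\varphi$ this is immediate: a short computation shows that $\tau_\varphi(z_{22})^a \tau_\varphi(z_{21})^b \Omega$ is a nonzero scalar multiple of $e_a \otimes e_b$, so the images of $\Omega$ under the polynomial algebra are dense, and the joint kernel of $\tau_\varphi(z_{21})^*$ and $\tau_\varphi(z_{22})^*$ is clearly $\mathbb{C}\Omega$. For $\Pi$ the scheme is the same but more delicate, because each of $\Pi(z_{21})$ and $\Pi(z_{22})$ is a sum of two tensor-factor-mixing summands; the plan is to exploit the $\Zp^2$-grading visible in the formulas for $\Pi(z_{ij})$ and to prove by induction on $a + b$ that the linear span of $\{\Pi(z_{21})^a \Pi(z_{22})^b \Omega : a, b \geq 0\}$ contains each basis vector $e_m \otimes e_n$. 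Irreducibility then follows by the standard Fock-type argument: every nonzero closed invariant subspace must contain a joint nullvector of $\Pi(z_{21})^*$ and $\Pi(z_{22})^*$, which one confirms during the cyclicity calculation is spanned by $\Omega$, and hence equals all of $\ell^2(\Zp)^{\otimes 2}$.

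Granted irreducibility, the classification theorem forces $\Pi \cong \tau_\psi$ for some $\psi \in [0, 2\pi)$: among the listed irreducibles only $\tau_\psi$ acts on $\ell^2(\Zp)^{\otimes 2}$, since $\pi_F$ lives on $\ell^2(\Zp)^{\otimes 3}$, $\omega_{\psi'}$ and $\nu_{\psi'}$ on $\ell^2(\Zp)$, and $\theta_{\psi_1,\psi_2}$ is one-dimensional. If $U$ is an intertwining unitary, then $U\Omega$ lies in the joint kernel of $\tau_\psi(z_{21})^*$ and $\tau_\psi(z_{22})^*$, which by the $\tau_\varphi$-computation above equals $\mathbb{C}(e_0 \otimes e_0)$, so $U\Omega = c(e_0 \otimes e_0)$ for some unimodular $c$. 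Comparing $U\Pi(z_{11})\Omega = \tau_\psi(z_{11})U\Omega$ yields $e^{i\psi} = e^{i\varphi}$, so $\psi = \varphi$. The main obstacle is the cyclicity step for $\Pi$: because $\Pi(z_{22})\Omega$ is already a nontrivial combination of $e_1 \otimes e_0$ and $e_0 \otimes e_2$, rather than a single basis vector as in $\tau_\varphi$, one cannot just read off basis vectors from iterated applications of the generators and must instead perform a careful inductive decoupling of the two mixing summands.
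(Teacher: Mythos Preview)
Your overall strategy---verify the vacuum identities, prove $\Pi$ is irreducible, then invoke the classification---is a genuinely different route from the paper's. The paper does not prove irreducibility of $\Pi$ at all: it simply checks that $\Omega=e_0\otimes e_0$ is cyclic for both representations and that
\[
\tau_\varphi(z_{ij})^*\Omega=(\mathcal F_\varphi\otimes\pi_0)\circ\mathcal D(z_{ij})^*\Omega
\]
for all $i,j$, and then quotes the uniqueness of coherent representations of a Wick algebra (Proposition~1.3.3 in the cited reference on Wick ordering). That external result does all the work, so no classification and no direct irreducibility argument are needed. Your approach is more self-contained but requires substantially more computation.

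There is, however, a genuine gap in your argument. The step ``among the listed irreducibles only $\tau_\psi$ acts on $\ell^2(\Zp)^{\otimes 2}$'' is not a valid way to identify the unitary equivalence class: $\ell^2(\Zp)$, $\ell^2(\Zp)^{\otimes 2}$, and $\ell^2(\Zp)^{\otimes 3}$ are all isometrically isomorphic as Hilbert spaces, so nothing prevents $\Pi$ from being unitarily equivalent to $\pi_F$ or to some $\omega_\psi$ on these grounds alone. To repair this you must use an invariant of the representation itself. One clean choice: you have already computed (or plan to) that the joint kernel of $\Pi(z_{21})^*$ and $\Pi(z_{22})^*$ is exactly $\mathbb C\Omega$ and that $\Pi(z_{11})^*$ acts on it by the nonzero scalar $e^{-i\varphi}$. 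Among the listed irreducibles, $\ker z_{22}^*\cap\ker z_{21}^*$ is zero for $\omega_\psi$, $\nu_\psi$, and $\theta_{\psi_1,\psi_2}$ (since $z_{22}^*$ or $z_{21}^*$ is invertible or has trivial kernel intersected with the other), while for $\pi_F$ this joint kernel is infinite-dimensional. Only for $\tau_\psi$ is it one-dimensional with $z_{11}^*$ acting by $e^{-i\psi}$, and this both forces $\Pi\cong\tau_\psi$ and reads off $\psi=\varphi$ directly.

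A secondary point: your sketch of irreducibility for $\Pi$ asserts that ``every nonzero closed invariant subspace must contain a joint nullvector of $\Pi(z_{21})^*$ and $\Pi(z_{22})^*$'' as a standard Fock-type fact. This is true, but it is not immediate from cyclicity; it uses that the relation $z_{22}^*z_{22}=q^4z_{22}z_{22}^*+1-q^4$ forces $\ker\Pi(z_{22})^*\cap V\neq 0$ on any nonzero invariant $V$, and that the remaining relations let you iterate with $z_{21}^*$. If you are avoiding the Wick-algebra machinery the paper cites, you should spell this out.
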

\begin{proof}
It is straightforward to verify that $\Omega = e_0 \otimes e_0$ is cyclic for all representations $\tau_\varphi$ and $(\mathcal{F}_\varphi \otimes \pi_0) \circ \mathcal{D}$, $\varphi \in [0, 2 \pi)$, and
\begin{align*}
	\tau_\varphi(z_{1 1})^* \Omega &=
	(\mathcal{F}_\varphi \otimes \pi_0) \circ \mathcal{D}(z_{1 1})^* \Omega =
	e^{-i \varphi} \Omega \\
	\tau_\varphi(z_{2 1})^* \Omega &=
	(\mathcal{F}_\varphi \otimes \pi_0) \circ \mathcal{D}(z_{2 1})^* \Omega = 0 \\
	\tau_\varphi(z_{2 2})^* \Omega &=
	(\mathcal{F}_\varphi \otimes \pi_0) \circ \mathcal{D}(z_{2 2})^* \Omega = 0.
\end{align*}
Therefore both $\tau_\varphi$ and $(\mathcal{F}_\varphi \otimes \pi_0) \circ \mathcal{D}$ are coherent representations of the Wick algebra corresponding to $\PolMatsymq$ with equal coherent state. (We refer to~\cite{wick} for the definition and properties of coherent representations of $*$-algebras allowing Wick ordering.) Since a coherent representation of a Wick algebra is unique up to unitary equivalence by~\cite{wick}*{Proposition~1.3.3}, this proves the lemma.
\end{proof}
\begin{theorem}
\label{thm:fock-isomorphism}
The Fock representation $\pi_F$ of $\contq$ is faithful, and consequently $\contq$ is $*$-isomorphic to $C^*(\pi_F(\PolMatsymq))$.
\end{theorem}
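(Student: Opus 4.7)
The plan is to show that $\pi_F$ is faithful on $\contq$, from which the identification $\contq \cong C^*(\pi_F(\PolMatsymq))$ is immediate. By the universal property, $\|a\|_{\contq} = \sup_\pi \|\pi(a)\|$ over all irreducible representations $\pi$ of $\PolMatsymq$, so given the classification above, it suffices to show $\|\pi(a)\| \leq \|\pi_F(a)\|$ for every $\pi$ in one of the four boundary families $\tau_\varphi$, $\omega_\varphi$, $\nu_\varphi$, $\theta_{\varphi_1, \varphi_2}$.

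The strategy I would take is to view $\pi_F$ as the canonical Fock representation of the Wick-algebra structure on $\PolMatsymq$ (in the sense of \cite{wick}, as already exploited for $\tau_\varphi$ in Lemma~\ref{lemma:wick}). First I would cast the relations~\eqref{eq:CMatsymq-2}--\eqref{eq:PolMatsymq-last} in explicit Wick-ordered form, so that the commutation coefficients are transparent. Next, extending the coaction $\mathcal{D}$ of Lemma~\ref{lemma:homomorphism} to the C*-level (using that $(\id \otimes \pi_\varphi) \circ \mathcal{D}$ is a bounded representation of $\PolMatsymq$ for each $\varphi$), I would seek, for each boundary irreducible $\pi$, a cyclic \emph{coherent} vector in the Hilbert space of a suitable composition $(\pi_F \otimes \pi_\varphi) \circ \mathcal{D}$ whose vector state on $\PolMatsymq$ matches a cyclic state of $\pi$. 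Invoking the coherent-representation uniqueness result cited in Lemma~\ref{lemma:wick} would then identify $\pi$ as a sub-representation of the composition, yielding the pointwise norm inequality $\|\pi(a)\| \leq \|(\pi_F \otimes \pi_\varphi) \circ \mathcal{D}(a)\|$.

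The hard part is then transferring this bound into the desired $\|\pi(a)\| \leq \|\pi_F(a)\|$. The crude estimate $\|(\pi_F \otimes \pi_\varphi) \circ \mathcal{D}(a)\| \leq \|a\|_{\contq}$ is useless, since it involves the very C*-norm we are trying to compute. To circumvent this I would exploit the structure of $\mathcal{D}$ more carefully: since $\CSUq$ admits a Peter--Weyl decomposition into finite-dimensional corepresentations compatible with the grading of $\CMatsymq$ (made explicit by~\eqref{eq:E-action}--\eqref{eq:K-action}), the image $(\pi_F \otimes \pi_\varphi)(\mathcal{D}(a))$ decomposes into finitely many elementary tensors of elements of $\pi_F(\PolMatsymq)$ with operators on $H_{\pi_\varphi}$, which should be estimable in terms of $\|\pi_F(a)\|$ alone. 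Pinpointing the correct coherent vectors for each boundary family and pushing this norm reduction through is where the technical heart of the argument lies.
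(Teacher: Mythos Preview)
Your overall reduction is correct: it suffices to show $\|\pi(a)\|\le\|\pi_F(a)\|$ for each of the boundary irreducibles. But the route you propose has a real gap at exactly the point you flag as ``the hard part.'' Even granting that each $\pi$ sits inside some $(\pi_F\otimes\pi_\varphi)\circ\mathcal{D}$, writing $\mathcal{D}(a)=\sum_i a_i\otimes b_i$ with finitely many terms only gives $\|(\pi_F\otimes\pi_\varphi)\circ\mathcal{D}(a)\|\le\sum_i\|\pi_F(a_i)\|\,\|\pi_\varphi(b_i)\|$, and the $a_i$ are \emph{not} $a$; there is no mechanism to bound $\|\pi_F(a_i)\|$ by $\|\pi_F(a)\|$. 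What you would really need is that $\mathcal{D}$ descends to a $*$-homomorphism $C^*(\pi_F(\PolMatsymq))\to C^*(\pi_F(\PolMatsymq))\otimes C(SU_2)_q$, but establishing that is tantamount to already knowing $\pi_F$ is faithful, so the argument is circular. The Peter--Weyl idea does not rescue this.

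The paper avoids the problem entirely with a much more elementary device. Since $\pi_F$ takes values in $C^*(S)^{\otimes 3}$ and $C_n,D\in C^*(S)$, one can apply the Toeplitz evaluation $*$-homomorphism $\Theta_\varphi:C^*(S)\to\C$, $\Theta_\varphi(S)=e^{i\varphi}$, on a single tensor leg. A direct check on generators shows that $(I\otimes I\otimes\Theta_\varphi)\circ\pi_F=\tau_\varphi$ and $(I\otimes\Theta_\varphi)\circ\tau_\varphi=\omega_\varphi$; Lemma~\ref{lemma:wick} identifies $\tau_\varphi$ with $(\mathcal{F}_\varphi\otimes\pi_0)\circ\mathcal{D}$, and applying $\Theta_0$ on its last leg yields $\nu_\varphi$; finally $\Theta_{\varphi_1}\circ\nu_{\varphi_2}=\theta_{\varphi_1,\varphi_2}$. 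Each of these is a genuine $*$-homomorphism between $C^*$-algebras, hence contractive, so $\|\pi(x)\|\le\|\pi_F(x)\|$ follows at once. The key idea you are missing is to produce the other irreducibles as \emph{quotients} of $\pi_F$ via these leg-wise evaluations, rather than as subrepresentations of coaction twists of $\pi_F$.
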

\begin{proof} Let $C^*(S)$ be the $C^*$-algebra generated by the isometry $S$. 
Recall that for $\varphi \in [0, 2 \pi)$, there exists a $*$-homomorphism $\Theta_\varphi: C^*(S) \rightarrow \mathbb{C}$ defined by $\Theta_\varphi(S) = e^{i \varphi}$, see e.g. \cite{davidson}.

The operators in~\eqref{eq:operators} satisfy
\begin{align*}
\label{eq:operators-identity-1}
C_n^2 &= (1 - q^n) \sum_{k = 0}^\infty q^{n k} S^{k + 1} (S^*)^{k + 1} \\
D &= \sum_{k = 0}^\infty q^k \left( S^k (S^*)^k - S^{k + 1} (S^*)^{k + 1} \right),
\end{align*}
and hence $C_n, D \in C^*(S)$. Moreover, we have $\Theta_\varphi(C_n) = 1$ and $\Theta_\varphi(D) = 0$.

We note that $C^*(\pi_F(\PolMatsymq)) \subset C^*(S)^{\otimes 3}$ and similarly for the other representations. By letting $\Theta_\varphi$ act on the last factor in the tensor products, we get the induced $*$-homomorphisms
\begin{align*}
	C^*(\pi_F(\PolMatsymq))
	\xrightarrow{I \otimes I \otimes \Theta_\varphi}
	&C^*(\tau_\varphi(\PolMatsymq)) \longrightarrow  \\
	& \xrightarrow{I \otimes \Theta_\varphi}
	C^*(\omega_\varphi(\PolMatsymq)).
\end{align*}
Since $C^*(\tau_\varphi(\PolMatsymq))$ is $*$-isomorphic to $C^*((\mathcal{F}_\varphi \otimes \pi_0) \circ \mathcal{D}(\PolMatsymq))$ by Lemma~\ref{lemma:wick}, by letting $\Theta_0$ act on the last factor in the tensor product for $(\mathcal{F}_\varphi \otimes \pi_0) \circ \mathcal{D}$, we get an induced $*$-homomorphism
\[
	C^*(\tau_\varphi(\PolMatsymq)) \longrightarrow
	C^*(\nu_\varphi(\PolMatsymq)).
\]
Finally, by letting $\Theta_{\varphi_1}$ act on $\nu_{\varphi_2}$, $\varphi_1, \varphi_2 \in [0, 2 \pi)$, we get an induced $*$-homomorphism
\[
	C^*(\nu_{\varphi_2}(\PolMatsymq)) \xrightarrow{\Theta_{\varphi_1}}
	C^*(\theta_{\varphi_1, \varphi_2}(\PolMatsymq)).
\]
As a $*$-homomorphism between $C^*$-algebras is contractive we get that for all $x \in \PolMatsymq$ and all irreducible representations $\pi$ of $\PolMatsymq$, $\| \pi(x) \| \leq \| \pi_F(x) \|$. By the definition of $\contq$, it follows that the $*$-homomorphism
\[
	\pi_F: \contq \longrightarrow C^*(\pi_F(\PolMatsymq))
\]
is an isomorphism.
\end{proof}
\section{The Shilov boundary} %for $\holoq$}
The notion of a noncommutative analog of the maximum modulus principle goes back to the foundational paper \cite{arveson1} by W. Arveson. Recall that the Shilov boundary of a compact Hausdorff space $X$ relative to a uniform algebra $\mathcal{A}$ in $C(X)$ is the smallest closed subset $S \subset X$ such that every function in $\mathcal{A}$ attains its  maximum modulus on $S$. The prototypical example of this is of course the maximum modulus principle encountered in the theory of holomorphic functions. For the disk algebra $A(\mathbb{D}) \subset C(\mathbb{D})$, consisting of functions that are continuous on the closed unit disk $\mathbb{D}$ and holomorphic on its interior, it is well known that every function in $A(\mathbb{D})$ attains its maximum modulus on the unit circle $\mathbb{T}$.

When passing to the noncommutative setting, a notion that arises is that of completely contractive and completely isometric maps. Let $E$ be a subspace of a $C^*$-algebra $\mathcal B$, and let $M_n(E)$ be the space of $n \times n$-matrices with entries in $E$ and norm induced by the one on $M_n(\mathcal B)$. Then any linear map $T$ from $E$ to another $C^*$-algebra $\mathcal{C}$ induces a linear map $T^{(n)}: M_n(E) \to M_n(\mathcal{C})$ by letting
\[
	T^{(n)}((a_{ij}))=(T(a_{ij})), \quad (a_{ij})\in M_n(E).
\]
The linear map $T$ is called a contraction (resp. an isometry) if $\|T\|\leq 1$ (resp. $\|T(a)\|=\|a\|$ for any $a\in E$).
It is called a complete contraction (resp. a complete isometry) if $T^{(n)}$ is a contraction (resp. an isometry) for all $n\in\mathbb N$.
Clearly a  $*$-homomorphism between $C^*$-algebras is completely contractive. 

The following noncommutative generalization of the Shilov boundary was given by Arveson in \cite{arveson1}.
\begin{definition}
Let $\mathcal{A}$ be a subspace of a $C^*$-algebra $\mathcal{B}$ such that $\mathcal{A}$ contains the identity of $\mathcal{B}$ and generates $\mathcal{B}$ as a $C^*$-algebra. A closed ideal $\mathcal{J}$ in $\mathcal{B}$ is called a \emph{boundary ideal} for $\mathcal{A}$ if the canonical quotient map $j_q: \mathcal{B} \rightarrow \mathcal{B} / \mathcal{J}$ is a complete isometry when restricted to $\mathcal{A}$. A boundary ideal is called the \emph{Shilov boundary} for $\mathcal{A}$ if it contains every other boundary ideal.
\end{definition}

It is clear from the definition that if the Shilov boundary exists, then it is unique, and it was shown by M. Hamana in~\cite{hamana} that the Shilov boundary exists for any $\mathcal A$ satisfying the conditions of the above definition. It is not difficult to see that this definition is equivalent to the definition of the Shilov boundary given above in the commutative case, i.e., when $\mathcal{B} = C(X)$.
\begin{example}
The ideal $\mathcal{J} = \{ f \in C(\mathbb{D}): f|_\mathbb{T} = 0 \}$ is the Shilov boundary for $A(\mathbb{D})$.
\end{example}
\begin{example}
In \cite{pro-tur}, the authors considered a $q$-analog $C(\mathbb{D}_2)_q$ (resp. $A(\mathbb{D}_2)_q$) of the $C^*$-algebra of continuous functions (resp. subalgebra of holomorphic functions) on the closed unit ball of complex $2 \times 2$ matrices $\mathbb{D}_2 = \{ \Z \in \textnormal{Mat}_2: \Z^* \Z \leq I \}$. The former was defined as the universal enveloping $C^*$-algebra of $\textnormal{Pol}(\textnormal{Mat}_2)_q$, a $q$-analog of the $*$-algebra of polynomials on $\mathbb{D}_2$. It was proven that the ideal in $C(\mathbb{D}_2)_q$ generated by
\[
	\sum_{j = 1}^2 q^{4 - \alpha - \beta} z_j^\alpha (z_j^\beta)^* -
	\delta^{\alpha \beta}, \quad
	\alpha, \beta = 1, 2,
\]
is the Shilov boundary for $A(\mathbb{D}_2)_q$.
\end{example}
Let $J$ be the $*$-ideal of $\PolMatsymq$ generated by
\[
	\sum_{k = 1}^2 q^{4 - i - j}z_{i k} z_{j k}^* -
	\delta_{i j},
	\quad i, j = 1, 2,
\]
and let $\mathcal{J}$ be the closed ideal generated by the image of $J$ in $\contq$. We shall refer to the quotient $\shilovcontq = \contq / \mathcal{J}$ as a $q$-analog of the $C^*$-algebra of continuous functions on the Shilov boundary of $\Dsym$. The canonical quotient map $j_q: \contq \rightarrow \shilovcontq$ is a $q$-analog of the restriction map that sends a continuous function on $\Dsym$ to its restriction to the Shilov boundary $S(\Dsym) = \{ \Z \in \Matsym: \Z^* \Z = I \}$. The aim of this section is to prove that $\mathcal{J}$ is the Shilov boundary for $\holoq$.

From the above discussion of representations of $\PolMatsymq$, we have the following result on which representations annihilate $J$, whose proof is a straightforward verification.
\begin{lemma}
\label{lemma:annihilators}
The representations $\omega_\varphi$ and $\theta_{\varphi_1, \varphi_2}$, $\varphi, \varphi_1, \varphi_2 \in [0, 2 \pi)$, are the only, up to unitary equivalence, irreducible representations of $\PolMatsymq$ that annihilate $J$. Moreover, any representation $(\chi_{\varphi_1, \varphi_2} \otimes \pi_0) \circ \mathcal{D}$, $\varphi_1, \varphi_2 \in [0, 2 \pi)$, annihilates $J$.
\end{lemma}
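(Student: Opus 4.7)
The plan is to dispatch both parts by direct substitution. Using the remark $z_{12} = q z_{21}$, $J$ is generated by
\begin{align*}
g_{11} &= q^2 z_{11} z_{11}^* + q^4 z_{21} z_{21}^* - 1, \\
g_{22} &= z_{21} z_{21}^* + z_{22} z_{22}^* - 1, \\
g_{12} &= q z_{11} z_{21}^* + q^2 z_{21} z_{22}^*,
\end{align*}
together with $g_{21} = g_{12}^*$.

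For the characterization of irreducible representations I would first verify that $\theta_{\varphi_1, \varphi_2}$ and $\omega_\varphi$ annihilate each $g_{ij}$. The one-dimensional case is immediate. For $\omega_\varphi$ the relevant identities are $C_n^2 = I - D^n$, its consequence $S^* C_n^2 S = I - q^n D^n$, and the commutation rule $S^* D = q D S^*$; they yield $\omega_\varphi(g_{11}) = (I - q^4 D^4) + q^4 D^4 - I = 0$, $\omega_\varphi(g_{22}) = D^4 + (I - D^4) - I = 0$, and $\omega_\varphi(g_{12}) = 0$ after commuting $D^2$ past $S^* C_4$. To rule out the three remaining irreducibles I would exhibit a single non-vanishing image: $\nu_\varphi(g_{11}) = C_4^2 - I = -D^4$, while the same identities give $\tau_\varphi(g_{22}) = -D^4 \otimes D^2$ and $\pi_F(g_{22}) = -D^4 \otimes D^2 \otimes I$.

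For the moreover part I would factor $(\chi_{\varphi_1, \varphi_2} \otimes \pi_0) \circ \mathcal{D}$ as $\pi_0 \circ \phi$, where $\phi := (\chi_{\varphi_1, \varphi_2} \otimes \id) \circ \mathcal{D}$ is a $*$-homomorphism $\PolMatsymq \to \CSUq$. Computing $\chi_{\varphi_1, \varphi_2}$ on the generators gives $z_{11} \mapsto q^{-1} e^{i \varphi_1}$, $z_{21} \mapsto 0$, $z_{22} \mapsto e^{i \varphi_2}$, whence $\phi(z_{i j}) = \lambda_1 t_{1 i} t_{1 j} + \lambda_2 t_{2 i} t_{2 j}$ with $\lambda_1 = q^{-1} e^{i \varphi_1}$ and $\lambda_2 = e^{i \varphi_2}$. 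The key identity is the quantum unitarity of $(t_{k l})$ in $\CSUq$: $\sum_k t_{m k} t_{n k}^* = \delta_{m n}$, which follows from $t_{1 1} t_{2 2} - q t_{1 2} t_{2 1} = 1$ and the involution formulas. Using it,
\[
\sum_k \phi(z_{i k}) \phi(z_{j k})^* = |\lambda_1|^2 t_{1 i} t_{1 j}^* + |\lambda_2|^2 t_{2 i} t_{2 j}^*,
\]
the cross $\lambda_1 \bar{\lambda_2}$-terms vanishing at once. Substituting $|\lambda_1|^2 = q^{-2}$, $|\lambda_2|^2 = 1$ together with the involution formulas, and invoking the quantum determinant relation a second time, one checks that $q^{4 - i - j}$ times the above sum equals $\delta_{i j}$ for each pair $(i, j)$; that is, $\phi(g_{i j}) = 0$, so $\phi$ annihilates $J$.

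The main difficulty is organizational rather than deep: once cross terms are killed by the unitarity relation, only three short case checks remain. An alternative, more conceptual route is to establish the coinvariance $\mathcal{D}(J) \subset J \otimes \CSUq$ (mirroring the classical fact that the coaction $Z \mapsto U^T Z U$ preserves the Shilov boundary $\{Z^* Z = I\}$) and then conclude from the first part of the lemma, but this passes through essentially the same identities.
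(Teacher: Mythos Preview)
Your proposal is correct and is precisely the kind of direct verification the paper has in mind; the paper itself gives no details beyond calling the proof ``a straightforward verification.'' Your treatment of the ``moreover'' part via the auxiliary $*$-homomorphism $\phi=(\chi_{\varphi_1,\varphi_2}\otimes\id)\circ\mathcal{D}$ and the unitarity relation $\sum_k t_{mk}t_{nk}^*=\delta_{mn}$ in $\CSUq$ is a clean way to organize the computation, and the exhibited non-zero images $\nu_\varphi(g_{11})=-D^4$, $\tau_\varphi(g_{22})=-D^4\otimes D^2$, $\pi_F(g_{22})=-D^4\otimes D^2\otimes I$ suffice to exclude the remaining irreducibles.
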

\begin{theorem}
\label{thm:boundary-ideal}
The ideal $\mathcal{J}$ is a boundary ideal for $\holoq$.
\end{theorem}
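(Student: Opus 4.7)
Since $j_q$ is a $*$-homomorphism between $C^*$-algebras, it is automatically completely contractive. The content of the theorem is therefore the reverse inequality
\[
\|A\|_{M_n(\contq)} \leq \|j_q^{(n)}(A)\|_{M_n(\shilovcontq)}, \qquad A \in M_n(\holoq), \ n \in \mathbb{N}.
\]
By Theorem~\ref{thm:fock-isomorphism} the left-hand side equals $\|\pi_F^{(n)}(A)\|$, so the plan is to construct a $*$-representation $\tilde\pi$ of $\PolMatsymq$ on a Hilbert space $\tilde H$ that annihilates the ideal $J$ (and hence descends to a representation $\bar\pi$ of $\shilovcontq$), together with an isometry $V: H_{\pi_F} \hookrightarrow \tilde H$ satisfying $\pi_F(p) = V^*\tilde\pi(p)V$ for every $p \in \holoq$. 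With such data,
\[
\|\pi_F^{(n)}(A)\| \leq \|\tilde\pi^{(n)}(A)\| = \|\bar\pi^{(n)}(j_q^{(n)}(A))\| \leq \|j_q^{(n)}(A)\|,
\]
yielding the required estimate.

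Following the strategy of \cites{vaksman-boundary, pro-tur}, the key tool is the unitary dilation of the Fock representation $\rho_F$ of $\PolCq$. The operator $\rho_F(z) = C_4 S$ is a contraction on $\ell^2(\Zp)$, so the Sz.-Nagy dilation theorem supplies a Hilbert space $\tilde H_1 \supset \ell^2(\Zp)$ and a unitary $U$ on $\tilde H_1$ with $(C_4 S)^n = P_{\ell^2(\Zp)} U^n|_{\ell^2(\Zp)}$ for all $n \geq 0$. Because $U$ is unitary, $\rho_U(z) := U$ defines a $*$-representation of $\PolCq$ (the relation $z^*z = q^4 z z^* + 1 - q^4$ holds trivially for unitaries). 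Composing with $\Pi_\varphi$ and tensoring with $\pi_0$ yields the $*$-representation
\[
\tilde\pi_\varphi := (\rho_U \circ \Pi_\varphi \otimes \pi_0) \circ \mathcal{D} : \PolMatsymq \longrightarrow \mathcal{B}(\tilde H_1 \otimes \ell^2(\Zp)).
\]
Spectrally decomposing $U = \int e^{i\psi}\, dE(\psi)$ expresses $\tilde\pi_\varphi$ as a direct integral of the representations $(\chi_{\psi, \varphi} \otimes \pi_0) \circ \mathcal{D}$, each of which annihilates $J$ by Lemma~\ref{lemma:annihilators}; hence so does $\tilde\pi_\varphi$. The Sz.-Nagy dilation property propagates through $\Pi_\varphi$ and the tensor with $\pi_0$ and, together with Lemma~\ref{lemma:wick}, shows that $\tilde\pi_\varphi$ dilates $\tau_\varphi$ on the holomorphic subalgebra.

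The main obstacle is passing from this dilation of $\tau_\varphi$ to a dilation of the Fock representation $\pi_F$ itself. The character $\Theta_\varphi$ relates $\pi_F$ to $\tau_\varphi$ as a quotient rather than a subrepresentation (cf.\ the proof of Theorem~\ref{thm:fock-isomorphism}), so the dilation of $\tau_\varphi$ does not directly yield one of $\pi_F$. I would resolve this by iterating the unitary-dilation procedure on the third tensor factor of $\pi_F$, replacing the contraction $C_4 S$ appearing there by a second unitary dilation on a Hilbert space $\tilde H_1'$; this produces a $*$-representation on $\tilde H_1 \otimes \ell^2(\Zp) \otimes \tilde H_1'$ that still annihilates $J$ (by the same direct-integral argument) and simultaneously dilates $\pi_F$ on $\holoq$. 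The remaining verification---that the resulting operators satisfy the defining relations of $\PolMatsymq$ and that the cross term $-q^{-1} S^* C_4 \otimes C_2 S C_2 S \otimes I$ in $\pi_F(z_{11})$ is correctly dilated---reduces to an explicit computation using the structure of the Sz.-Nagy dilation and the fact that $\mathcal{D}$ respects the algebra relations (Lemma~\ref{lemma:homomorphism}).
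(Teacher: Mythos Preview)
Your overall strategy coincides with the paper's: use a Sz.-Nagy dilation of $C_4 S$ to a unitary and exploit the resulting direct-integral decomposition into representations that annihilate $J$. Your $\tilde\pi_\varphi$ is exactly the paper's $\Psi_\varphi$, and the observation that it dilates $(\mathcal{F}_\varphi\otimes\pi_0)\circ\mathcal{D}\cong\tau_\varphi$ on $\holoq$ and annihilates $J$ is correct.

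The gap is in your final paragraph. You assert that a single $*$-representation on $\tilde H_1\otimes\ell^2(\Zp)\otimes\tilde H_1'$ can be built that both annihilates $J$ and dilates $\pi_F$ on $\holoq$. But if one writes this representation out---replacing $C_4S$ by $U$ via $\rho_U\circ\Pi_\varphi$ and then $e^{i\varphi}$ by $U'$ on a third factor---one obtains on $z_{11}$ the operator $q^{-1}U\otimes(S^*C_2)^2\otimes I+I\otimes D^2\otimes U'$, whose compression to $H^{\otimes 3}$ is $q^{-1}C_4S\otimes(S^*C_2)^2\otimes I+I\otimes D^2\otimes C_4S$, which is \emph{not} $\pi_F(z_{11})$. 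The source of the mismatch is that $\tau_\varphi$ and $(\mathcal{F}_\varphi\otimes\pi_0)\circ\mathcal{D}$ are only unitarily equivalent, not equal, and the intertwiner supplied by Lemma~\ref{lemma:wick} mixes the two tensor factors in a $\varphi$-dependent way; it cannot be absorbed into a product-type dilation on $\tilde H_1\otimes H\otimes\tilde H_1'$.

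The paper sidesteps this by never building a single ambient dilation. It runs the argument as a \emph{chain of two norm inequalities}. First, replace $C_4S$ in the third tensor factor of $\pi_F$ by a unitary to get a $*$-representation $\Psi$ on $H^{\otimes 2}\otimes K$ (one checks directly that the defining relations hold); $\Psi$ decomposes as $\int\tau_\varphi\,d\mu$, giving
\[
\|(\pi_F(a_{ij}))\|\;\le\;\sup_{\varphi}\|(\tau_\varphi(a_{ij}))\|,\qquad (a_{ij})\in M_n(\holoq).
\]
Second, your $\tilde\pi_\varphi=\Psi_\varphi$ gives, for each fixed $\varphi$,
\[
\|(\tau_\varphi(a_{ij}))\|\;=\;\|(((\mathcal{F}_\varphi\otimes\pi_0)\circ\mathcal{D})(a_{ij}))\|\;\le\;\sup_{\varphi_1,\varphi_2}\|(((\chi_{\varphi_1,\varphi_2}\otimes\pi_0)\circ\mathcal{D})(a_{ij}))\|.
\]
Concatenating the two inequalities and invoking Lemma~\ref{lemma:annihilators} finishes the proof. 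The key point is that the passage through Lemma~\ref{lemma:wick} is made at the level of norms---where unitary equivalence is harmless---rather than at the level of constructing one dilation that must simultaneously respect the tensor structures of both $\pi_F$ and $(\mathcal{F}_\varphi\otimes\pi_0)\circ\mathcal{D}$.
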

\begin{proof}
By Lemma~\ref{lemma:annihilators}, any representation $(\chi_{\varphi_1, \varphi_2} \otimes \pi_0) \circ \mathcal{D}$, $\varphi_1, \varphi_2 \in [0, 2 \pi)$, annihilates $J$. Thus we have a family of $*$-homomorphisms
\[
	\shilovcontq \longrightarrow
	C^*((\chi_{\varphi_1, \varphi_2} \otimes \pi_0) \circ \mathcal{D}(\PolMatsymq))
\]
given by $b + \mathcal{J} \mapsto (\chi_{\varphi_1, \varphi_2} \otimes \pi_0) \circ \mathcal{D}(b)$, and consequently
\[
	\sup_{\varphi_1, \varphi_2 \in [0, 2 \pi)}
	\| ((\chi_{\varphi_1, \varphi_2} \otimes \pi_0) \circ \mathcal{D}(b_{i j})) \| \leq
	\| (b_{i j} + \mathcal{J}) \|
\]
for all $(b_{i j}) \in M_n(\contq)$. Since the quotient map $j_q: \contq \rightarrow \shilovcontq$ is a $*$-homomorphism, $j_q$ and consequently $j_q|_{\holoq}$ is a complete contraction. It is therefore sufficient to prove that
\[
	\| (a_{i j}) \| =
	\| (\pi_F(a_{i j})) \| \leq
	\sup_{\varphi_1, \varphi_2 \in [0, 2 \pi)}
	\| ((\chi_{\varphi_1, \varphi_2} \otimes \pi_0) \circ \mathcal{D}(a_{i j})) \|
\]
for all $(a_{i j}) \in M_n(\holoq)$.

We note that the operator $C_4 S$ is a contraction on $H = \ell^2(\Zp)$. By Sz.-Nagy's dilation theorem (see e.g.~\cite{paulsen}*{Theorem~1.1}), there exists a unitary operator $U$ on a Hilbert space $K$ containing $H$ as a subspace such that $(C_4 S)^n = P_H U^n|_H$ for all $n \geq 0$. Consider the map $\Psi$ into $\mathcal{B}(H^{\otimes 2} \otimes K)$ defined on the generators of $\PolMatsymq$ by
\begin{align*}
	\Psi(z_{1 1}) &= I \otimes D^2 \otimes U - q^{-1} S^* C_4
	\otimes C_2 S C_2 S \otimes I \\
	\Psi(z_{2 1}) &= D^2 \otimes C_2 S \otimes I \\
	\Psi(z_{2 2}) &= C_4 S \otimes I \otimes I.
\end{align*}
It is readily verified that this map extends uniquely to a representation of $\PolMatsymq$ on $H^{\otimes 2} \otimes K$. By the spectral theorem, $\Psi$ can be written as a direct integral representation of the field of representations $\{ \tau_\varphi: \varphi \in [0, 2 \pi) \}$, i.e.,
\[
	\Psi = \int_{[0, 2 \pi)}^\oplus \tau_\varphi \otimes I_\varphi \, d \mu(\varphi).
\]
For $\xi \in H^{\otimes 2} \otimes K$, we have
\[
	\| \Psi(b) \xi \|^2 =
	\int_0^{2 \pi} \| \tau_\varphi
	\otimes I_\varphi(b) \xi(\varphi) \|^2 \, d \mu(\varphi) \leq
	\sup_{\varphi \in [0, 2 \pi)} \| \tau_\varphi(b) \|^2 \| \xi \|^2.
\]
Thus $\| \Psi(b) \| \leq \sup_{\varphi \in [0, 2 \pi)} \| \tau_\varphi(b) \|$
for all $b \in \contq$, and since $\Psi$ induces a representation on $M_n(\contq)$, similar arguments show that
\[
	\| (\Psi(b_{i j})) \| \leq
	\sup_{\varphi \in [0, 2 \pi)} \| (\tau_\varphi(b_{i j})) \|
\]
for all $(b_{i j}) \in M_n(\contq)$. Since $\pi_F(a) = (I \otimes I \otimes P_H) \Psi(a)|_{H^{\otimes 3}}$, we get
\begin{align}
\label{eq:inequality-1}
	\| (\pi_F(a_{i j})) \| \leq
	\sup_{\varphi \in [0, 2 \pi)} \| (\tau_\varphi(a_{i j})) \|
\end{align}
for all $(a_{i j}) \in M_n(\holoq)$.

Our next step is to show that, for all $\varphi \in [0, 2 \pi)$,
\[
	\| (\tau_\varphi(a_{i j})) \| \leq
	\sup_{\varphi_1, \varphi_2 \in [0, 2 \pi)}
	\| ((\chi_{\varphi_1, \varphi_2} \otimes \pi_0) \circ \mathcal{D}(a_{i j})) \|
\]
for all $(a_{i j}) \in M_n(\holoq)$. Similar to the previous step, we consider the map $\Psi_\varphi$ into $\mathcal{B}(K \otimes H)$ defined on the generators of $\PolMatsymq$ by
\begin{align*}
	\Psi_\varphi(z_{1 1}) &=
	q^{-1} U \otimes S^* C_2 S^* C_2 + e^{i \varphi} I \otimes D^2 \\
	\Psi_\varphi(z_{2 1}) &=
	-q^{-1} U \otimes S^* C_2 D + e^{i \varphi} I \otimes C_2 S D \\
	\Psi_\varphi(z_{2 2}) &= q U \otimes D^2 + e^{i \varphi} I \otimes C_2 S C_2 S.
\end{align*}
It is readily verified that $\Psi_\varphi$ extends to a representation of $\contq$ on $K \otimes H$. By~\eqref{eq:chi} and the spectral theorem, $\Psi_\varphi$ can be written as a direct integral representation of the field of representations $\{ (\chi_{\varphi_1, \varphi} \otimes \pi_0) \circ \mathcal{D}: \varphi_1 \in [0, 2 \pi) \}$, i.e.,
\[
	\Psi_\varphi =
	\int_{\varphi_1 \in [0, 2 \pi)}^\oplus (\chi_{\varphi_1, \varphi} \otimes \pi_0)
	\circ \mathcal{D} \otimes I_{\varphi_1} \, d \mu(\varphi_1).
\]
For $\xi \in K \otimes H$, we have
\begin{align*}
	\| \Psi_\varphi(b) \xi \|^2 &=
	\int_0^{2 \pi} \| (\chi_{\varphi_1, \varphi} \otimes \pi_0) \circ \mathcal{D} \otimes
	I_{\varphi_1}(b) \xi(\varphi_1) \|^2 \, d \mu(\varphi_1) \\
	&\leq \sup_{\varphi_1 \in [0, 2 \pi)} \| (\chi_{\varphi_1, \varphi} \otimes \pi_0)
	\circ \mathcal{D}(b) \|^2 \int_0^{2 \pi} \| \xi(\varphi_1) \|^2 \, d \mu(\varphi_1) \\
	&= \sup_{\varphi_1 \in [0, 2 \pi)} \| (\chi_{\varphi_1, \varphi} \otimes \pi_0)
	\circ \mathcal{D}(b) \|^2 \| \xi \|^2.
\end{align*}
Thus
\[
	\| \Psi_\varphi(b) \| \leq \sup_{\varphi_1 \in [0, 2 \pi)} \|
	(\chi_{\varphi_1, \varphi} \otimes \pi_0) \circ \mathcal{D}(b) \|
\]
for all $b \in \contq$. Since $\Psi_\varphi$ induces a representation on $M_n(\contq)$, similar arguments show that
\[
	\| (\Psi_\varphi(b_{i j})) \| \leq \sup_{\varphi_1 \in [0, 2 \pi)} \|
	((\chi_{\varphi_1, \varphi} \otimes \pi_0) \circ \mathcal{D}(b_{i j})) \|
\]
for all $(b_{i j}) \in M_n(\contq)$. Since
\[
	(\mathcal{F}_\varphi \otimes \pi_0) \circ \mathcal{D}(a) =
	(P_H \otimes I) \Psi_\varphi(a)|_{H^{\otimes 2}}
\]
and
\[
\| \tau_\varphi(a) \| =
\| (\mathcal{F}_\varphi \otimes \pi_0) \circ \mathcal{D}(a) \|
\]
for all $a \in \holoq$, we have
\[
	\| \tau_\varphi(a) \| \leq \| \Psi_\varphi(a) \| \leq
	\sup_{\varphi_1, \varphi_2 \in [0, 2 \pi)}
	\| (\chi_{\varphi_1, \varphi_2} \otimes \pi_0) \circ \mathcal{D}(a) \|.
\]
By a similar argument, we have
\begin{equation}
\label{eq:inequality-2}
	\| (\tau_\varphi(a_{i j})) \| \leq
	\sup_{\varphi_1, \varphi_2 \in [0, 2 \pi)}
	\| ((\chi_{\varphi_1, \varphi_2} \otimes \pi_0) \circ \mathcal{D}(a_{i j})) \|
\end{equation}
for all $(a_{i j}) \in M_n(\holoq)$. By combining the inequalities~\eqref{eq:inequality-1} and~\eqref{eq:inequality-2}, we get the desired statement.
\end{proof}
\begin{lemma}
\label{lemma:bound}
If $\pi$ is a representation of $\PolMatsymq$ that annihilates $J$, then
\[
	\| \pi(x) \| \leq \sup_{\varphi \in [0, 2 \pi)} \| \omega_\varphi(x) \|
\]
for all $x \in \PolMatsymq$.
\end{lemma}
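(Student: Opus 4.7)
The plan is to reduce the bound on a general representation $\pi$ annihilating $J$ to a bound involving only irreducibles annihilating $J$, and then to dispose of the one-dimensional $\theta_{\varphi_1,\varphi_2}$ summands by exhibiting them as quotients of the $\omega_\varphi$ via the characters $\Theta_\psi$ from the proof of Theorem~\ref{thm:fock-isomorphism}.

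First, since $\PolMatsymq$ is $*$-bounded, by the universal property of $\contq$ the $*$-representation $\pi$ extends uniquely to a $*$-representation of $\contq$. Because $\pi$ annihilates $J$, its extension annihilates the closed ideal $\mathcal{J}$ and hence factors through a $*$-representation $\tilde{\pi}$ of the quotient $C^*$-algebra $\shilovcontq = \contq / \mathcal{J}$. A $*$-representation of a $C^*$-algebra is contractive, so
\[
    \| \pi(x) \| = \| \tilde{\pi}(x + \mathcal{J}) \| \leq \| x + \mathcal{J} \|_{\shilovcontq}.
\]
The norm on the right is realized as the supremum of $\|\sigma(x + \mathcal{J})\|$ over all irreducible $*$-representations $\sigma$ of $\shilovcontq$, and these correspond bijectively to the irreducible representations of $\PolMatsymq$ annihilating $J$. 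By Lemma~\ref{lemma:annihilators}, the latter are exhausted, up to unitary equivalence, by the $\omega_\varphi$ and the $\theta_{\varphi_1,\varphi_2}$.

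Next, I would show that every $\theta_{\varphi_1,\varphi_2}$ factors through some $\omega_\varphi$. The image $\omega_\varphi(\PolMatsymq)$ lies inside $C^*(S)$, so precomposing $\omega_\varphi$ with the character $\Theta_\psi \colon C^*(S) \to \mathbb{C}$ from Theorem~\ref{thm:fock-isomorphism} produces a one-dimensional representation of $\PolMatsymq$. Using $\Theta_\psi(S) = e^{i\psi}$, $\Theta_\psi(C_4) = 1$, $\Theta_\psi(D) = 0$, a short computation on the three generators shows that $\Theta_\psi \circ \omega_\varphi$ agrees with $\theta_{\varphi_1,\varphi_2}$ provided we choose $\varphi_2 = \psi$ and $2\varphi - \psi + \pi \equiv \varphi_1 \pmod{2\pi}$, and such $\varphi,\psi$ exist for every prescribed pair $(\varphi_1,\varphi_2)$. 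Since $\Theta_\psi$ is a $*$-homomorphism of $C^*$-algebras it is contractive, so
\[
    \| \theta_{\varphi_1,\varphi_2}(x) \| \leq \| \omega_\varphi(x) \| \leq \sup_{\varphi' \in [0, 2\pi)} \| \omega_{\varphi'}(x) \|.
\]

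Combining the three steps gives $\|\pi(x)\| \leq \sup_{\varphi \in [0, 2\pi)} \|\omega_\varphi(x)\|$, as required. The main obstacle, though minor, is the bookkeeping in the third step: one must verify that the family $\{\theta_{\varphi_1,\varphi_2}\}$ is genuinely parametrized by the image of $\omega_\varphi$ under the characters $\Theta_\psi$, and this is done by solving the two phase equations above. The rest of the argument is purely formal, relying only on the contractivity of $*$-homomorphisms between $C^*$-algebras and on Lemma~\ref{lemma:annihilators}.
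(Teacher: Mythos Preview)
Your proposal is correct and follows essentially the same route as the paper: reduce to irreducibles annihilating $J$ (which by Lemma~\ref{lemma:annihilators} are the $\omega_\varphi$ and $\theta_{\varphi_1,\varphi_2}$), and then dominate each $\theta_{\varphi_1,\varphi_2}$ by some $\omega_\varphi$ via the character $\Theta_\psi$ on $C^*(S)$. The paper carries this out with the explicit choice $\psi=\varphi_2$ and $\varphi=(\varphi_1+\varphi_2+\pi)/2$, which is exactly a solution of your phase equations; your reduction to irreducibles through the quotient $\shilovcontq$ just makes explicit what the paper states in one line.
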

\begin{proof}
%
%Let $x$ be an element in $\PolMatsymq$. Without loss of generality we may assume that $x$ is self-adjoint. Let $f$ be a pure state such that $|g(\pi(x))| = \| \pi(x) \|$, and let $\pi_f$ and $\xi_f$ be the irreducible representation and unit vector obtained from the GNS construction applied to $f$. This gives
%\[
%	\| \pi(x) \| = |f(\pi(x))| =
%	|\langle \pi_f \circ \pi(x) \xi_f, \xi_f \rangle| \leq
%	\| \pi_f \circ \pi(x) \|,
%\]
%and since $\pi_f \circ \pi$ is an irreducible representation that annihilates $J$, $\pi_f \circ \pi$ is unitarily equivalent to either $\omega_\varphi$ for some $\varphi \in [0, 2 \pi)$ or $\theta_{\varphi_1, \varphi_2}$ for some $\varphi_1, \varphi_2 \in [0, 2 \pi)$.
As $\theta_{\varphi_1,\varphi_2}$ and $\omega_\varphi$, $\varphi, \varphi_1, \varphi_2 \in [0, 2 \pi)$ are the only irreducible representations of $\PolMatsymq$ that annihilate $J$
it is sufficient to prove that
\[
	|\theta_{\varphi_1, \varphi_2}(x)| \leq
	\sup_{\varphi \in [0, 2 \pi)} \| \omega_\varphi(x) \|
\]
for all $x \in \PolMatsymq$ and $\varphi_1, \varphi_2 \in [0, 2 \pi)$.
Recall that $C^*(\omega_\varphi(\PolMatsymq))$ is a subalgebra of $C^*(S)$ and if $\Theta_{\varphi_2}:C^*(S)\to\mathbb C$ is the $*$-homomorphism given by $\Theta_{\varphi_2}(S)=e^{i\varphi_2}$, then
it is readily verified that $\Theta_{\varphi_2}$ induces a $*$-homomorphism
\[
	C^*(\omega_{(\varphi_1 + \varphi_2 + \pi)/2}(\PolMatsymq)) \longrightarrow
	C^*(\theta_{\varphi_1, \varphi_2}(\PolMatsymq)),
\]
where each generator $\omega_{(\varphi_1 + \varphi_2 + \pi)/2}(z_{i j})$ is mapped to $\theta_{\varphi_1, \varphi_2}(z_{i j})$. Thus
\[
	|\theta_{\varphi_1, \varphi_2}(x)| \leq
	\| \omega_{(\varphi_1 + \varphi_2 + \pi)/2}(x) \| \leq
	\sup_{\varphi \in [0, 2 \pi)} \| \omega_\varphi(x) \|
\]
for all $x \in \PolMatsymq$ and $\varphi_1, \varphi_2 \in [0, 2 \pi)$, which proves the lemma.
\end{proof}
\begin{theorem}
The ideal $\mathcal{J}$ contains all other boundary ideals.
\end{theorem}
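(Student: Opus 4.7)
The plan is to show that any boundary ideal $\mathcal I$ satisfies $\mathcal I\subseteq\mathcal J$ by proving, for each $\varphi\in[0,2\pi)$, that $\omega_\varphi$ is an Arveson boundary representation of $\contq$ relative to $\holoq$. Combined with the identification $\mathcal J=\bigcap_\varphi\ker\omega_\varphi$ this will give the conclusion.

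To see that $\mathcal J=\bigcap_\varphi\ker\omega_\varphi$: the inclusion $\subseteq$ holds because each $\omega_\varphi$ annihilates $J$ by Lemma~\ref{lemma:annihilators}. Conversely, if $\omega_\varphi(a)=0$ for every $\varphi$, then Lemma~\ref{lemma:bound}, extended by continuity from $\PolMatsymq$ to $\contq$, forces $\theta_{\varphi_1,\varphi_2}(a)=0$ for every $\varphi_1,\varphi_2$; since these together with the $\omega_\varphi$ exhaust the irreducible representations of $\shilovcontq$ (Lemma~\ref{lemma:annihilators}), we obtain $j_q(a)=0$, i.e.\ $a\in\mathcal J$.

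The central step is to show each $\omega_\varphi$ is a boundary representation: any UCP map $\Phi\colon\contq\to\mathcal B(\ell^2(\Zp))$ with $\Phi|_{\holoq}=\omega_\varphi|_{\holoq}$ must coincide with $\omega_\varphi$. I would argue via the multiplicative domain of $\Phi$. The key inputs are the operator inequalities
\[
z_{21}z_{21}^*+z_{22}z_{22}^*\leq I,\qquad q^2z_{11}z_{11}^*+q^4z_{21}z_{21}^*\leq I
\]
in $\contq$. These are consequences of Theorem~\ref{thm:boundary-ideal}: viewing $R_1=(z_{11},qz_{21})$ and $R_2=(z_{21},z_{22})$ as rows with entries in $\holoq$, the complete isometry of $j_q$ on $\holoq$ gives $\|R_iR_i^*\|_{\contq}=\|R_iR_i^*\|_{\shilovcontq}$, and the defining relations of $\mathcal J$ make these norms equal $q^{-2}$ and $1$ respectively; since $R_iR_i^*\geq 0$, these norm identities upgrade to the stated operator inequalities. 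Both inequalities are saturated by $\omega_\varphi$: a direct check gives $\omega_\varphi(R_iR_i^*)=q^{2i-4}I$. Combined with Kadison's inequality $\Phi(z_{ij}z_{ij}^*)\geq\omega_\varphi(z_{ij})\omega_\varphi(z_{ij})^*$ for each generator, summing and comparing with the saturated upper bound forces $\Phi(z_{ij}z_{ij}^*)=\omega_\varphi(z_{ij}z_{ij}^*)$ for every $i,j$. The relations~\eqref{eq:PolMatsymq-last} express each $z_{ij}^*z_{ij}$ as a linear combination of the $z_{kl}z_{kl}^*$'s and a scalar, so also $\Phi(z_{ij}^*z_{ij})=\omega_\varphi(z_{ij}^*z_{ij})$. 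Thus every $z_{ij}$ (and each adjoint, by $*$-linearity of $\Phi$) lies in the multiplicative domain of $\Phi$; this is a $C^*$-subalgebra containing $C^*$-algebra generators of $\contq$, so it equals $\contq$, which means $\Phi$ is a $*$-homomorphism and $\Phi=\omega_\varphi$.

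The conclusion is then routine: given a boundary ideal $\mathcal I$ and $\varphi\in[0,2\pi)$, the complete contraction $\omega_\varphi\circ(j_\mathcal{I}|_{\holoq})^{-1}\colon j_{\mathcal I}(\holoq)\to\mathcal B(\ell^2(\Zp))$ extends by Arveson's extension theorem to a UCP map $\tilde\Phi\colon\contq/\mathcal I\to\mathcal B(\ell^2(\Zp))$, and $\tilde\Phi\circ j_\mathcal{I}$ is a UCP map on $\contq$ agreeing with $\omega_\varphi$ on $\holoq$, hence equal to $\omega_\varphi$ by the boundary representation property. Thus $\mathcal I\subseteq\ker\omega_\varphi$ for every $\varphi$, and the first paragraph gives $\mathcal I\subseteq\mathcal J$. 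The most delicate point of the plan is the multiplicative domain step for $z_{11}$: unlike $(z_{21},z_{22})$ this generator does not sit in a norm-$1$ row but only in a row of norm $q^{-1}$, which is precisely what the $q$-deformed Shilov relation $q^2 R_1R_1^*=1$ in $\shilovcontq$ supplies via Theorem~\ref{thm:boundary-ideal}.
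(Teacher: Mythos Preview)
Your argument is correct and takes a genuinely different route from the paper. The paper proceeds by assuming a boundary ideal $\mathcal I\supset\mathcal J$ (implicitly invoking Hamana's existence theorem to reduce to this case), then exploits only the ordinary isometry property on the single element $z_{21}+e^{i\theta}$: computing $\|z_{21}+e^{i\theta}+\mathcal J\|=2$ and matching it against $\|z_{21}+e^{i\theta}+\mathcal I\|$ forces the set $K=\{\varphi:\omega_\varphi(\mathcal I)=0\}$ to be dense in $[0,2\pi)$, whence $\mathcal I\subset\bigcap_{\varphi}\ker\omega_\varphi=\mathcal J$ by Lemma~\ref{lemma:bound}. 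Your approach instead establishes directly that every $\omega_\varphi$ has the unique extension property, via the multiplicative-domain technique and the row inequalities $z_{21}z_{21}^*+z_{22}z_{22}^*\leq I$ and $z_{11}z_{11}^*+q^2 z_{21}z_{21}^*\leq q^{-2}I$ pulled back from $\shilovcontq$ through Theorem~\ref{thm:boundary-ideal}; saturation under $\omega_\varphi$ plus Kadison--Schwarz then pins down $\Phi$ on each $z_{ij}z_{ij}^*$, and the Wick-type relations~\eqref{eq:PolMatsymq-last} transfer this to $z_{ij}^*z_{ij}$, giving Choi's criterion for the multiplicative domain. This is more conceptual and sits squarely in Arveson's boundary-representation framework; it also avoids the density computation and the appeal to Hamana. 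The paper's argument, by contrast, is more elementary in that it uses only scalar norm computations with a single test element and needs only the isometry (not complete isometry) of $j_q$ for the maximality step. One small remark: your row inequalities can alternatively be verified directly in the faithful Fock representation via Theorem~\ref{thm:fock-isomorphism}, so the appeal to Theorem~\ref{thm:boundary-ideal} at that point, while correct, is not essential.
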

\begin{proof}
Let $\mathcal{I}$ be a boundary ideal such that $\mathcal{I} \supset \mathcal{J}$, and let $i_q$ and $j_q$,
\begin{align*}
	i_q&: \contq \longrightarrow \contq / \mathcal{I} \\
	j_q&: \contq \longrightarrow \shilovcontq,
\end{align*}
be the canonical quotient maps. If
\[
	K = \left\{ \varphi \in [0, 2 \pi): \omega_\varphi(\mathcal{I}) = 0 \right\}
\]
is nonempty, then $\mathcal{I} \subset \cap_{\varphi \in K} \ker \omega_\varphi$, and hence $\mathcal{I} = \mathcal{J}$ if
\[
	\bigcap_{\varphi \in K} \ker \omega_\varphi \subset \mathcal{J}.
\]
We claim that it is sufficient to prove that $K$ is dense in $[0, 2 \pi]$. Indeed, suppose that $x$ lies in $\ker \omega_\varphi$ for all $\varphi \in K$. If $\bar{K}= [0, 2 \pi]$, it follows by Lemma~\ref{lemma:bound} that $j_q(x) = 0$, i.e., $x \in \mathcal{J}$.

Since $i_q$ and $j_q$ are isometries when restricted to $\holoq$, we have
\begin{equation}
\label{eq:isometry}
	\| z_{i j} + e^{i \theta} + \mathcal{J} \| =
	\| z_{i j} + e^{i \theta} \| =
	\| z_{i j} + e^{i \theta} + \mathcal{I} \|
\end{equation}
for any $\theta \in [0, 2 \pi)$. Since $\omega_\varphi$ and $\theta_{\varphi_1, \varphi_2}, \varphi, \varphi_1, \varphi_2 \in [0, 2 \pi)$, are the only, up to unitary equivalence, irreducible representations of $\PolMatsymq$ that annihilate $J$, Lemma~\ref{lemma:bound} gives
\begin{equation}
\label{eq:maximum}
\begin{aligned}
	\| z_{2 1} + e^{i \theta} + \mathcal{J} \|
	&= \sup_{\varphi \in [0, 2 \pi)} \| \omega_\varphi(z_{2 1}) + e^{i \theta} \| \\
	&= \sup \left\{ |\zeta + e^{i \theta}|:
	\zeta \in \bigcup_{k \geq 0} q^{2 k} \mathbb{T} \right\}=2.
\end{aligned}
\end{equation}
If $\pi$ is an irreducible representation of $\contq / \mathcal{I}$ which does not vanish on $z_{2 1} + \mathcal{I}$, then $\pi \circ i_q$ is an irreducible representation of $\contq$ which does not vanish on $z_{2 1}$. Since $\pi \circ i_q(\mathcal{J}) = 0$, $\pi \circ i_q$ is unitary equivalent to $\omega_\varphi$ for some $\varphi \in K$. Thus
\begin{align*}
	\| z_{2 1} + e^{i \theta} + \mathcal{I} \|
	&= \sup_\pi \| \pi \circ i_q(z_{2 1}) + e^{i \theta} \| \\
	&= \sup_{\varphi \in K}
	\| \omega_\varphi(z_{2 1}) + e^{i \theta} \| \\
	&= \sup \left\{ |\zeta + e^{i \theta}|:
	\zeta \in \bigcup_{k \geq 0} q^{2 k} X_K \right\},
\end{align*}
where $\pi$ ranges over the irreducible representations of $\contq / \mathcal{I}$ and  $X_K = \{ e^{i \varphi}: \varphi \in K \}\subset\mathbb T$. From~\eqref{eq:isometry} and~\eqref{eq:maximum} we conclude that
\[
	\sup \left\{ |\zeta + e^{i \theta}|:
	\zeta \in \bigcup_{k \geq 0} q^{2 k} X_K \right\} = 2
\]
for any $\theta \in [0, 2 \pi)$, and hence $X_K$ must be dense in $\mathbb T$, which proves the theorem.
\end{proof}
\section{Regular functions on the Shilov boundary}
In~\cite{bershtein-2}, a $*$-algebra $\C[S(\mathbb{D}_n^\sym)]_q$ referred to as the algebra of regular functions on the Shilov boundary on the quantum unit ball in the space of symmetric complex $n \times n$ matrices was defined as the localization of $\C[\textnormal{Mat}_n^\sym]_q$ with respect to the Ore system $(\detqsym \mathbf{z})^\Zp$, where $\detqsym \mathbf{z}$ is a $q$-analog of the determinant of the symmetric matrix $\mathbf{z} = (z_{i j})$ corresponding to the generators of $\C[\textnormal{Mat}_n^\sym]_q$ (see~\cite{bershtein-2} for definitions of $\C[\textnormal{Mat}_n^\sym]_q$ and $\detqsym \mathbf{z}$). In this section we show that for our particular case, $n = 2$, this agrees with our previous result.

In our case of $\shilovregq$ the quantum determinant takes the form
\[
	\detqsym \mathbf{z} = z_{2 2} z_{1 1} - q^{-1} z_{2 1}^2,
\]
and the involution is given by
\begin{align*}
	z_{1 1}^* &= q^{-2} z_{2 2} (\detqsym \mathbf{z})^{-1} \\
	z_{2 1}^* &= -q^{-1} z_{2 1} (\detqsym \mathbf{z})^{-1} \\
	z_{2 2}^* &= z_{1 1} (\detqsym \mathbf{z})^{-1}.
\end{align*}
%\[
%	z_{k l}^* = (-q)^{k + l - 2 n}
%	\detqsym \widehat{\mathbf{z}_{k l}} \, (\detqsym \mathbf{z})^{-1},
%\]
%where $z_{k l} = z_{l k}$ and $\widehat{\mathbf{z}_{k l}}$ is the matrix obtained by omitting row $k$ and column $l$ from $\mathbf{z}$.
\begin{theorem}
The map $k: z_{i j} + \mathcal{J} \mapsto z_{i j} \in \shilovregq$, $i, j = 1, 2$, can be extended to a $*$-isomorphism of the $*$-subalgebra of $\shilovcontq$ generated by $z_{i j} + \mathcal{J}$, $i, j = 1, 2$, onto $\shilovregq$.
\end{theorem}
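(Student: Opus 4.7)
The plan is to produce the inverse of $k$ as an injective $*$-homomorphism $\tilde k\colon \shilovregq\to\shilovcontq$ given on generators by $z_{ij}\mapsto z_{ij}+\mathcal{J}$, and then to verify that its image is precisely the $*$-subalgebra of the statement. To extend $\tilde k$ past the generators and across the Ore localization, two algebraic facts must be established in $\shilovcontq$: the quantum determinant $D = z_{22}z_{11}-q^{-1}z_{21}^{2}$ is invertible there, and the three involution formulas defining $\shilovregq$ reproduce the involution that $\shilovcontq$ inherits from $\PolMatsymq$.

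For invertibility, I would first rewrite the generators of $J$ using $z_{12}=qz_{21}$, obtaining the three boundary relations
\[
q^{2}z_{11}z_{11}^{*}+q^{4}z_{21}z_{21}^{*}=1,\quad z_{21}z_{21}^{*}+z_{22}z_{22}^{*}=1,\quad z_{11}z_{21}^{*}+qz_{21}z_{22}^{*}=0
\]
in $\shilovcontq$. Combined with the $\PolMatsymq$-relation $z_{21}^{*}z_{21}=q^{2}z_{21}z_{21}^{*}-(1-q^{2})z_{22}z_{22}^{*}+1-q^{2}$, the second boundary relation instantly yields $z_{21}^{*}z_{21}=z_{21}z_{21}^{*}$, so $z_{21}$ is normal in $\shilovcontq$. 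Setting $y=z_{21}z_{21}^{*}$ and expanding $DD^{*}$ while repeatedly pushing starred generators to the right via the $\PolMatsymq$ commutators collapses every term into a polynomial in $y$ alone, and the four contributions combine to $q^{-2}\bigl((1-y)^{2}+2y(1-y)+y^{2}\bigr)=q^{-2}$. The identity $D^{*}D=q^{-2}$ follows either by an analogous expansion or by noting that in every irreducible representation ($\omega_\varphi$ or $\theta_{\varphi_1,\varphi_2}$) of $\shilovcontq$ the element $D$ acts as a scalar of modulus $q^{-1}$ and that these irreducible representations separate $\shilovcontq$. Hence $D$ is invertible with $D^{-1}=q^{2}D^{*}$.

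Given this inverse, the three involution formulas of $\shilovregq$ translate into
\[
z_{11}^{*}=z_{22}D^{*},\qquad z_{21}^{*}=-qz_{21}D^{*},\qquad z_{22}^{*}=q^{2}z_{11}D^{*}
\]
in $\shilovcontq$. For the first, I would expand $z_{22}D^{*}=z_{22}z_{11}^{*}z_{22}^{*}-q^{-1}z_{22}(z_{21}^{*})^{2}$ and apply the commutators $z_{11}^{*}z_{22}=z_{22}z_{11}^{*}$ and $z_{22}z_{21}^{*}=q^{-2}z_{21}^{*}z_{22}$ together with $z_{22}z_{22}^{*}=1-y$ and the derived relation $z_{11}^{*}z_{21}=-q^{-1}z_{22}z_{21}^{*}$ (obtained by substituting the adjoint $z_{21}z_{11}^{*}=-qz_{22}z_{21}^{*}$ of the third boundary relation into the $\PolMatsymq$-formula for $z_{11}^{*}z_{21}$); the two terms then fuse to $z_{11}^{*}$. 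The other two identities are shorter variants of the same calculation. These identities together with the invertibility of $D$ show that $\tilde k$ extends uniquely to a $*$-homomorphism on all of $\shilovregq$.

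Surjectivity of $\tilde k$ onto the $*$-subalgebra of the statement is immediate from the three identities, since the image already contains every $z_{ij}+\mathcal{J}$ and every $z_{ij}^{*}+\mathcal{J}$. For injectivity, every element of $\shilovregq$ can be written as $g\cdot(\detqsym \mathbf{z})^{-n}$ with $g\in\CMatsymq$, so $\tilde k$-vanishing forces $g+\mathcal{J}=0$; it then suffices to check that the natural map $\CMatsymq\to\shilovcontq$ is injective. I would do this by applying $\{\omega_\varphi\}_{\varphi\in[0,2\pi)}$ to a normal-form expression $g=\sum c_{abc}z_{11}^{a}z_{21}^{b}z_{22}^{c}$: the image $\omega_\varphi(g)$ is a Fourier polynomial in $\varphi$ whose $n$-th mode is $\sum_{2a+b=n}c_{abc}(-q^{-1})^{a}(S^{*}C_{4})^{a}(D^{2})^{b}(C_{4}S)^{c}$, and evaluating on the basis $\{e_k\}$ shows that these operator coefficients are linearly independent for distinct $(a,b,c)$ with the same $2a+b$, forcing all $c_{abc}=0$. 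The main technical obstacle is the bookkeeping in $DD^{*}=q^{-2}$ and in the three involution identities: they are quantum versions of the classical facts that $\det Z$ is unimodular and $Z^{-1}=\bar Z$ on unitary symmetric matrices, but correctly tracking the $q$-powers through the non-commutative rearrangements is where all the care is required.
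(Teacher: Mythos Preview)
Your proposal is correct and follows essentially the same route as the paper: construct the inverse map $\tilde k=k'\colon \shilovregq\to\shilovcontq$ by showing that $\detqsym\mathbf z$ becomes invertible in $\shilovcontq$ with inverse $q^{2}(\detqsym\mathbf z)^{*}$ and that the involution formulas of $\shilovregq$ reproduce the inherited involution. You supply considerably more detail than the paper does---an explicit algebraic expansion of $DD^{*}$ (the paper simply observes that $\pi\bigl((\detqsym\mathbf z)^{*}\detqsym\mathbf z\bigr)=q^{-2}$ in every representation annihilating $J$), a verification of the three adjoint identities, and a representation-theoretic injectivity argument via the $\omega_\varphi$---but the strategy is the same.
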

\begin{proof}
It is straightforward to verify that an extension of $k$ to polynomials in $z_{i j} + \mathcal{J}$, $i, j = 1, 2$, is well-defined. We construct an inverse to $k$ as follows. Since $\pi((\detqsym \mathbf{z})^* \detqsym \mathbf{z}) = q^{-2}$ for all representations of $\PolMatsymq$ that annihilate $J$, it follows that $(\detqsym \mathbf{z})^* \detqsym \mathbf{z} =\detqsym \mathbf{z}(\detqsym \mathbf{z})^* =q^{-2}$ in $\shilovcontq$. Moreover, each $z_{i j}^*$ in $\shilovcontq$ has the same expression in terms of the generators and $(\detqsym \mathbf{z})^{-1}$ as in $\shilovregq$. Since $\shilovregq$ is generated by $z_{i j}$, $i, j = 1, 2$, and $(\detqsym \mathbf{z})^{-1}$, we have a $*$-homomorphism $k': \shilovregq \rightarrow \shilovcontq$ given by $z_{i j} \mapsto z_{i j} + \mathcal{J}$ and $(\detqsym \mathbf{z})^{-1} \mapsto q^2 (\detqsym \mathbf{z})^* + \mathcal{J}$. It is easily verified that $k$ and $k'$ are mutually inverse to each other.
\end{proof}
\section*{Acknowledgments} The authors are grateful to Olga Bershtein for discussions during the preparation of this paper.

\begin{bibdiv}
\begin{biblist}
\bib{arveson1}{article}{
	author = {Arveson, W.},
	title = {Subalgebras of $C^*$-algebras},
	journal = {Acta Math.},
	volume = {123},
	pages = {141--224},
	year = {1969},
}
\bib{arveson2}{article}{
	author = {Arveson, W.},
	title = {Subalgebras of $C^*$-algebras II},
	journal = {Acta Math.},
	volume = {128},
	pages = {271--308},
	year = {1972},
}
\bib{bershtein-1}{article}{
	author = {Bershtein, O.},
	title = {On $*$-representations of polynomial algebras in quantum matrix spaces of rank 2},
	journal = {Algebr. Represent. Theor.},
	volume = {17},
	number = {4},
	pages = {1083--1093},
	year = {2014},
}
\bib{bershtein-2}{article}{
	author = {Bershtein, O.},
	title = {Regular functions on the Shilov boundary},
	journal = {J. Algebra Appl.},
	volume = {4},
	number = {6},
	pages = {613--629},
	year = {2005},
}
\bib{davidson}{book}{
	author = {Davidson, K.},
	title = {$C^*$-algebras by Examples},
	publisher = {American Mathematical Society, Providence, RI},
	volume = {6},
	series = {Fields Institute Monographs},
	year = {1996},
}
\bib{hamana}{article}{
	author = {Hamana, M.},
	title = {Injective envelopes of operator systems},
	journal = {Publ. RIMS Kyoto Univ.},
	volume = {15},
	number = {3},
	pages = {773--785},
	year = {1979},
}
\bib{wick}{article}{
	author = {J\o rgensen, P.E.T.},
	author = {Schmitt, L.M.},
	author = {Werner, R.F.},
	title = {Positive representations of general commutation relations allowing Wick ordering},
	journal = {J. Funct. Anal.},
	volume = {134},
	number = {1},
	pages = {33--99},
	year = {1995},
}
\bib{klimyk_schmudgen}{book}{ 
author={ Klimyk, A.}, 
author={Schm\"udgen, K.},
title={ Quantum Groups and Their
Representations},
publisher={Springer-Verlag},
year={1997},
}
\bib{paulsen}{book}{
	author = {Paulsen, V.},
	title = {Completely Bounded Maps and Operator Algebras},
	publisher = {Cambridge University Press},
	volume = {78},
	series = {Cambridge studies in advanced mathematics},
	year = {2003},
}
\bib{pro-tur}{article}{
	author = {Proskurin, D.},
	author = {Turowska, L.},
	title = {Shilov boundary for ``holomorphic functions" on a quantum matrix ball},
	journal = {Algebr. Represent. Theor.},
	volume = {18},
	number = {4},
	pages = {917--929},
	year = {2015},
}
\bib{pusz-woronowicz}{article}{
	author = {Pusz, W.},
	author = {Woronowicz, S.L.},
	title = {Twisted second quantization},
	journal = {Rep. Math. Phys.},
	volume = {27},
	number = {2},
	pages = {231--257},
	year = {1989},
}
\bib{vaksman-book}{book}{
	author = {Vaksman, L.},
	title = {Quantum Bounded Symmetric Domains},
	publisher = {American Mathematical Society},
	volume = {238},
	series = {Translations of Mathematical Monographs},
	year = {2010},
}
\bib{vaksman-boundary}{article}{
	author = {Vaksman, L.},
	title = {Maximum principle for ``holomorphic functions" in the quantum ball},
	journal = {Mat. Fiz. Anal. Geom.},
	volume = {10},
	number = {1},
	pages = {12--28},
	year = {2003},
}
\end{biblist}
\end{bibdiv}
\end{document}